\definecolor{darkgreen}{rgb}{0,0.5,0}
\definecolor{darkred}{rgb}{0.7,0,0}
\theoremstyle{plain}
\newtheorem{lemma}{Lemma}[section]
\newtheorem{thm}[lemma]{Theorem}
\newtheorem{prop}[lemma]{Proposition}
\newtheorem{cor}[lemma]{Corollary}
\theoremstyle{definition}
\newtheorem{rmk}[lemma]{Remark}
\newtheorem{rem}[lemma]{Remark}
\numberwithin{equation}{section}
\newcommand{\al}{\alpha}
\newcommand{\ga}{\gamma}
\newcommand{\de}{\delta}
\newcommand{\la}{\lambda}
\newcommand{\si}{\sigma}
\newcommand{\vph}{\varphi}
\newcommand{\ep}{\varepsilon}
\newcommand{\R}{\ensuremath{{\mathbb R}}}
\newcommand{\N}{\ensuremath{{\mathbb N}}}
\newcommand{\Z}{\ensuremath{{\mathbb Z}}}
\newcommand{\Hyp}{\ensuremath{{\mathbb H}}}
\newcommand{\weakto}{\rightharpoonup}
\newcommand{\grad}{\nabla}
\newcommand{\norm}[1]{\Vert#1\Vert}
\newcommand{\brmk}{\begin{rmk}}
\newcommand{\ermk}{\end{rmk}}
\newcommand{\partref}[1]{\hbox{(\csname @roman\endcsname{\ref{#1}})}}
\newcommand{\beq}{\begin{equation}}
\newcommand{\eeq}{\end{equation}}
\newcommand{\beqs}{\begin{equation*}}
\newcommand{\eeqs}{\end{equation*}}
\newcommand{\beqa}{\begin{equation}\begin{aligned}}
\newcommand{\eeqa}{\end{aligned}\end{equation}}
\newcommand{\beqas}{\begin{equation*}\begin{aligned}}
\newcommand{\eeqas}{\end{aligned}\end{equation*}}
\newcommand{\half}{\frac{1}{2}}
\newcommand{\M}{\ensuremath{{\mathcal M}}}
\newcommand{\abs}[1]{{\lvert#1\rvert} }
\newcommand{\babs}[1]{{\big\lvert#1\big\rvert} } 
\newcommand{\eps}{\varepsilon}
\newcommand{\na}{\nabla}
\newcommand{\Hol}{{\mathcal{H}}} 
\newcommand{\ddeps}{\tfrac{d}{d\eps}\vert_{\eps=0}}
\newcommand{\lan}{\langle}
\newcommand{\ran}{\rangle}
\newcommand{\Rea}{\text{\textnormal{Re}}}
\newcommand{\dvg}{dv_g}
\newcommand{\ubar}{{\bar u}}
\newcommand{\gbar}{{\bar g}}
 \newcommand{\tr}{\mathrm{tr}}
\newcommand{\ReP}{\mathrm{Re}}
\newcommand{\partiald}[2]{\frac{\partial #1}{\partial #2}}
\newcommand{\inj}{\mathrm{inj}}
\newcommand{\teich}{Teichm\"uller }
\newcommand{\id}{\mathrm{id}}
\newcommand{\Eid}{\mathcal{E}}
\newcommand{\Mab}{\mathcal{M}^*}
\newcommand{\arcosh}{\mathrm{arcosh}}
\newcommand{\loj}{\L{}ojasiewicz }
\newcommand{\gmu}{{\hat g(\mu)}}
\newcommand{\graduF}{\na_u F} 
\newcommand{\gradmuF}{\na_\mu F} 
\newcommand{\dwp}{d_{\mathrm{WP}}}
\title[Uniqueness and nonuniqueness of limits of Teichm\"uller harmonic map flow]
{{Uniqueness and nonuniqueness of limits of\\ Teichm\"uller harmonic map flow}
\\ 
}
\author{James Kohout, Melanie Rupflin and Peter M. Topping}
\date{\today}
\begin{document}

\maketitle


\begin{abstract}
The harmonic map energy of a map from 
a closed, constant-curvature surface to a closed target manifold can be seen as a functional on the space of maps and domain metrics.
We consider the gradient flow for this energy. 
In the absence of singularities, previous theory established that the flow converges to a branched minimal immersion, but only at a sequence of times converging to infinity, and only after pulling back by a sequence of diffeomorphisms.
In this paper we investigate whether it is necessary to pull back by these diffeomorphisms, and whether the convergence is uniform as $t\to\infty$.
\end{abstract}

\section{Introduction and results}

In their seminal work \cite{ES1964} from 1964, Eells and Sampson showed that the harmonic map flow into a closed manifold of nonpositive sectional curvature admits global solutions starting with arbitrary smooth initial data, which converge to a limiting harmonic map when restricted to a suitable sequence of times $t_i\to\infty$. In practice, it is important to understand whether the flow converges as $t\to\infty$, rather than merely at a sequence of times $t_i\to\infty$, for example when one considers a smooth family of initial maps and would like to extract a smooth family of subsequent flows and corresponding \emph{unique} limits.
Soon after the work of Eells and Sampson, Hartman \cite{Hartman1967} exploited the nonpositive curvature of the target in order to establish the required uniqueness in this case. For harmonic map flow into more general targets, the uniqueness turned out to be false in general \cite{Topping1996}, \cite{Topping1997}. The flow can limit to a nontrivial circle of harmonic maps.

Meanwhile, the question of uniqueness of blow-ups of singularities has become a central problem across geometric analysis. Early instances are the questions of uniqueness of tangent cones for minimal surfaces, and of tangent maps for harmonic maps; these are key issues in the understanding of the regularity theory for the respective variational problems, and the topic remains a subject of intense study. The first example of nonuniqueness was given by White in 1992 \cite{White1992} in the context of harmonic maps, where a suitable warped product target metric was constructed that is necessarily not real analytic by the work of Simon \cite{Simon1983}. 
Following Struwe's initiation of a theory of global weak solutions of the harmonic map flow from two-dimensional domains, perhaps developing bubbles as singularities \cite{Struwe1985}, 
the question of uniqueness of blow-ups of singularities came to the fore also in this context.
Nonuniqueness was demonstrated in \cite{Topping04} by constructing examples with different finite-time bubbles arising for different sequences of blow-ups.
Even in the case of convergence of the flow at infinite time, it is apparent from \cite{Topping1996} that for certain flows one could have one sequence of times $t_i\to\infty$ at which the flow converges smoothly to a harmonic map, and a different sequence of times $s_j\to\infty$ at which the flow undergoes bubbling. 

An even richer set of uniqueness questions arises when one replaces the harmonic map flow from surfaces by the alternative gradient flow for the harmonic map energy in which one evolves not just the map, but also the domain metric, as introduced in \cite{DLL2006} for maps from tori and in \cite{RT2016} for maps from closed surfaces of general type, in order to find minimal immersions. The purpose of this paper is to settle the majority of these questions by constructing several similar \emph{winding} examples of the flow into smooth targets as well as proving a convergence result for analytic targets. In order to understand these questions, we recall some of the basic theory for this alternative \emph{Teichm\"uller} harmonic map flow.

Given a smooth closed oriented surface $M$ of genus $\gamma \geq 1$ and a smooth closed Riemannian manifold $(N,g_N)$ the flow can be thought of as the gradient flow for the energy
\begin{equation}\label{eqn:dirichlet-energy}
	E(u,g) = \half\int_M |du|^2_g \dvg
\end{equation}
with respect to both the map $u \colon M \to N$ and a metric $g \in \M_c$, where $\M_c$ is the space of (smooth) constant curvature $c\in\{-1,0\}$ metrics on $M$ (with the restriction that they have unit area when $c=0$). The flow equations are given by
\begin{equation}\label{eqn:flow}
\partiald{u}{t} = \tau_g(u); \qquad
\partiald{g}{t} = \frac{\eta^2}{4}\ReP(P_g(\Phi(u,g))),
\end{equation}
where $\tau_g(u)$ denotes the tension field of $u$ (i.e. $\tr\nabla du$), $P_g$ denotes the $L^2$ orthogonal projection from the space of quadratic differentials onto the finite-dimensional space of \emph{holomorphic} quadratic differentials, $\Phi(u,g)$ denotes the Hopf differential and $\eta > 0$ is a fixed parameter. The flow decreases the energy $E(t) := E(u(t),g(t))$ according to
\begin{equation} \label{eqn:energy-decay}
\frac{dE}{dt} = -\norm{\tau_g(u)}_{L^2(M,g)}^2 - \frac{\eta^2}{32}\norm{P_g(\Phi(u,g))}_{L^2(M,g)}^2.
\end{equation}
In particular, we will use that the evolution of $g$ is constrained by
\begin{equation}
\label{cherry}
\int_{t_0}^{t_1}\|\partial_t g\|_{L^2(M,g(t))}dt
\leq C(\eta)\int_{t_0}^{t_1}\left(-\frac{dE}{dt}\right)^\half dt
\leq C(\eta)[E(0)]^\half [t_1-t_0]^\half.
\end{equation}
For further details on these objects and the flow see \cite{RT2016} and \cite{RT2019}. 

Given any initial data $(u_0,g_0) \in H^1(M,N)\times \M_c$ we know that a global weak solution of the flow \eqref{eqn:flow} exists which is smooth except at possibly finitely many times,
and is unique amongst weak energy nonincreasing flows.
These singularities are caused by the bubbling off of finitely many harmonic spheres, a change of topology of the domain when $\inj(M,g(t))\to 0$ as the time $t$ approaches a singular time (only if the genus $\gamma \geq 2$) or both of these phenomena happening at the same time: see \cite{Rexistence, RT2019}. When the genus $\gamma = 1$, where the flow was first studied by \cite{DLL2006}, finite time singularities of the metric are excluded as the completeness of Teichm\"uller space gives an a priori positive lower bound on the injectivity radius on any finite time interval.

In situations where the metric does not degenerate as $t\to \infty$, 
the results of 
\cite{RT2016} (and \cite{DLL2006} when $\gamma =1$) 
guarantee the existence of a sequence of times $t_i\to\infty$, a sequence of orientation preserving diffeomorphisms $\psi_i \colon M \to M$, a metric $\bar{g} \in \M_c$ and a weakly conformal harmonic map $\bar{u} \colon (M,\bar{g}) \to N$ for which we have $\psi_i^*g(t_i) \to \bar{g}$ smoothly and $u(t_i)\circ \psi_i \rightharpoonup \bar{u}$ weakly in $H^1(M,N)$ and strongly away from finitely many points. In this paper we investigate questions related to this result. Firstly, can we have winding behaviour as in the harmonic map flow. That is, does the limit depend on the subsequence taken above? Secondly, 
do we actually need to pull back by diffeomorphisms, or do we have convergence anyway?
Finally, what happens if the target is required to be analytic?

In the first part of this paper we construct smooth (but not analytic) settings where 
winding behaviour of the domain metric does indeed occur even for initial data for which 
the map component remains smooth for all time, and at infinite time, and for which we have a uniform lower bound on the injectivity radius of the domain. For this part of the paper we consider the flow on tori, and recall that in this case 
the analysis of the flow  is simplified by the fact that 
not only does the velocity $\partiald{g}{t}$ lie 
in a two-dimensional subspace of the infinite dimensional space $T\M_0$, but also the distribution defined by these ``horizontal'' subspaces is integrable. Indeed, by pulling back the initial data and the whole flow by a fixed diffeomorphism, it suffices to consider the flow of metrics in the explicit two-parameter family of flat unit area metrics 
\begin{equation}\label{def:Mab} 
\Mab :=\{g_{a,b}= T_{a,b}^*g_E: (a,b)\in \Hyp\}
\end{equation}
on $T^2:=\R^2/\Z^2$,
where $T_{a,b} \colon \R^2 \to \R^2$ is the linear map sending $(1,0) \mapsto \frac{1}{\sqrt{b}}(1,0)$ and $(0,1) \mapsto \frac{1}{\sqrt{b}}(a,b)$, $g_E$ is the Euclidean metric and $\Hyp$ is the upper-half plane. 
The Weil-Petersson metric on $\Mab$ then corresponds, up to a scaling factor, to the hyperbolic distance on $\Hyp$.

As we shall see, we can construct targets for which  \teich harmonic map flow from the torus $T^2$ exhibits winding behaviour, has nonunique limits and does not converge for any sequence of times $t_i\to \infty$ unless we pull back by a sequence of diffeomorphisms. We can arrange that this sequence of diffeomorphisms must diverge in the mapping class group. 

Indeed, as we shall prove, we can construct smooth targets so that the flow of the metrics $g(t)$ has a prescribed asymptotic behaviour as described in our first main result: 

\begin{thm} \label{thm:main}
Let $(G_s)_{s\in[0,\infty)}$ be any smooth curve in $\Mab$ whose projection to moduli space is 1-periodic, i.e. for which there exists a diffeomorphism $\varphi \colon T^2\to T^2$ so that $G_s =\varphi^*G_{s+1}$ for every $s\in [0,\infty)$.

Then there exists a smooth closed target manifold $(N,g_N)$ and initial data $(u_0,g_0)\in C^\infty(T^2,N)\times \Mab$ such that the corresponding solution $(u(t),g(t))$ of \teich harmonic map flow has ${\sup_{t} \norm{\na u(t)}_{L^\infty(M,g(t))}<\infty}$, 
and has the following asymptotic behaviour: 

\begin{itemize}
	\item There exists a  smooth function $z \colon [0,\infty) \to [0,\infty)$ satisfying $z(t) \to \infty$ as $t \to \infty$ so that 
 	\begin{equation} \label{claim:main-thm-metric}
 		\dwp(g(t), G_{z(t)}) \to 0 \text{ as } t \to \infty
 	\end{equation}
 	where $\dwp$ is the Weil-Petersson distance on $\Mab$.
	\item For every $z\in [0,1)$ there exists a sequence of times $t^z_i\to \infty$ so that after pulling back by the $i^{th}$ iterate 
	$\varphi^i$ the maps $u(t^z_i)$ converge smoothly,
	\begin{equation} \label{claim:main-thm-map} 
		u(t^z_i)\circ \varphi^{i}\to u_{z},
	\end{equation} 
to a minimal immersion $u_{z}:T^2 \to N$. The resulting minimal immersions $u_{z_1}$ and $u_{z_2}$ with $z_1 \neq z_2$ parametrise different minimal surfaces in $(N,g_N)$. After pulling back by $\varphi^i$ the metrics $g(t^z_i)$  also converge
	\begin{equation*}
		(\varphi^i)^*g(t^z_i) \to G_z \text{ in } C^\infty.
	\end{equation*}
\end{itemize}
\end{thm}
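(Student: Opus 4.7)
The strategy is to construct, in the spirit of the non-analytic winding examples for the harmonic map flow in \cite{Topping1996,Topping1997}, a target admitting a one-parameter family of minimal tori whose induced conformal classes trace out $(G_r)_{r\in\R}$, and then to arrange that the Teichmüller flow approaches this family while drifting through it without converging. After an isotopy I may take $\varphi$ to be the linear automorphism of $T^2=\R^2/\Z^2$ induced by some $M\in\mathrm{SL}_2(\Z)$. Writing each $G_r$ as a symmetric positive $2\times 2$ matrix $A(r)$ with $\det A(r)=1$, the relation $G_s=\varphi^*G_{s+1}$ becomes $A(r+1)=M^T A(r) M$, so the metric $A(r)\,dx\cdot dx+dr^2$ on $T^2\times\R$ is invariant under the $\Z$-action $n\cdot(x,r)=(\varphi^n(x),r+n)$ and descends to a smooth metric $g_{N_0}$ on the closed $3$-manifold $N_0=(T^2\times\R)/\Z$. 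The slice inclusions $x\mapsto(x,r)$ project to maps $F_r:(T^2,G_r)\to N_0$ that are isometric onto their image, hence weakly conformal; and since $\det A(r)$ is constant in $r$, each slice $T^2\times\{r\}$ is a minimal submanifold, so $F_r$ is a weakly conformal harmonic map. For distinct $z_1,z_2\in[0,1)$ the slices $T^2\times\{z_1\}$ and $T^2\times\{z_2\}$ are disjoint in $N_0$, furnishing the required distinct minimal immersions $u_z:=F_z$.

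In the unperturbed setting the family $(F_r,G_r)_{r\in\R}$ consists of critical points of $E$ with constant energy, and the tangent to the family is a zero direction of the Hessian; therefore the flow from $(F_0,G_0)$ is stationary and does not wind. To drive winding, $g_{N_0}$ is modified to a smooth but non-analytic, $\Z$-equivariant metric $g_N$ chosen so that (i) a nearby family of critical points persists along the direction of $(G_r)$, (ii) the transverse Hessian of $E$ stays strictly negative, so the family is normally hyperbolic, and (iii) the non-analytic higher-order structure of $E$ in a neighbourhood of the family produces a spiral geometry for $\nabla E$ that funnels nearby trajectories around the family in the $+r$-direction. Non-analyticity is indispensable here: the \L ojasiewicz--Simon inequality (\cite{Simon1983}) would otherwise force the flow to converge to a single point on the critical family, precluding winding; this is precisely the mechanism behind the dichotomy between Theorem~\ref{thm:main} and the convergence result for analytic targets proved later in the paper.

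The dynamical analysis then proceeds from an initial datum $(u_0,g_0)$ slightly perturbed off the family in the direction initiating the winding. Using \eqref{eqn:energy-decay} together with \eqref{cherry}, the uniform injectivity-radius lower bound (automatic on the torus, as noted in the introduction), and an $L^\infty$ gradient estimate for $u(t)$ obtained from parabolic regularity for maps into a fixed smooth closed target with controlled energy, one shows that $(u(t),g(t))=(F_{r(t)},G_{r(t)})+(\delta u(t),\delta g(t))$ with $\|(\delta u,\delta g)\|_{C^k}\to 0$ for every $k$ and with a drift function $r(t)\to\infty$. Setting $z(t):=r(t)$ yields \eqref{claim:main-thm-metric} directly, and for each $z\in[0,1)$, choosing times $t_i^z$ with $r(t_i^z)=z+i$ and using the equivariance identities $F_{z+i}=F_z\circ\varphi^{-i}$ and $G_{z+i}=(\varphi^{-i})^*G_z$ yields \eqref{claim:main-thm-map} after pullback by $\varphi^i$. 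The main obstacle is the construction of the non-analytic perturbation in the second step: simultaneously arranging persistence of a critical family, strict normal stability, a controlled directed drift, and $\mathrm{SL}_2(\Z)$-equivariance is the analogue in one higher dimension of the delicate non-analytic bump constructions of \cite{Topping1996,Topping1997}, made more intricate here by the fact that the metric $g(t)$ itself evolves.
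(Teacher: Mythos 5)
Your strategy is genuinely different from the paper's, and while the broad narrative is plausible, the crucial step is left unconstructed. You propose to build a one-parameter family of critical points $(F_r,G_r)$ of $E$ and then perturb the target metric non-analytically so that the Teichm\"uller flow spirals around this family -- an adaptation in one higher dimension of the harmonic-map-flow nonuniqueness examples. But the asserted properties of the perturbation (persistence of a critical family, strict normal stability, a directed spiral drift in the $+r$-direction, and $\Z$-equivariance, all simultaneously) are precisely what a proof has to deliver, and you explicitly flag this as ``the main obstacle'' without providing it. In effect the proposal reduces the theorem to an unproven perturbation lemma that is at least as hard as the theorem itself, and moreover one that has to be verified against the subtle additional difficulty (which you note) that the domain metric is itself evolving.

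The paper's construction sidesteps a spiral geometry entirely. It uses $g_{N_0}=dz^2+f_0(z)\,G_z$ on $\R\times T^2$ with a \emph{strictly decreasing} coupling $f_0\in C^\infty(\R,[1,\infty))$, $f_0\to 1$, and restricts attention to maps $u=(z,\id)$ with $z$ spatially constant. In this invariant class there are \emph{no} critical points at all: if $(z,\id)$ were conformal then $g=G_z$, the $D_G\Eid$ term of the tension vanishes, and what remains is $-f_0'(z)\Eid\neq 0$. So the $z$-component must keep moving; since $E$ is decreasing and $E(t)\geq f_0(z(t))\geq 1$, establishing $E(t)\to 1$ (via a subsequential stationary limit) forces $z(t)\to\infty$, and the tracking $\dwp(g(t),G_{z(t)})\to 0$ is the elementary inequality $\dwp(g,G_z)\leq C(E-1)^{1/2}$. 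The non-analyticity is placed in a coupling function $f$ on $\Hyp\times T^2$ that is identically $1$ on $\{x\leq 0\}$ (where the limit critical circle $\{(0,y)\}\times T^2$ sits) and $1+\exp(-y e^{-\rho(\log x)})$ for $x>0$; this is entirely explicit, and the flow reduces to an ODE for $z(t)$ coupled to the $(a,b)$-system in $\Hyp$. Your cylinder-wrapping step (the $\Z$-action, the equivariance identities $F_{z+i}=F_z\circ\varphi^{-i}$, $G_{z+i}=(\varphi^{-i})^*G_z$, and the choice of $t_i^z$ with $r(t_i^z)=z+i$) matches the paper, but the drift mechanism -- the heart of the construction -- is missing; citing the analogy with the harmonic map flow examples does not substitute for it.
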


Since $\Mab$ is a horizontal submanifold of the space of metrics, any diffeomorphism will pull it back to another horizontal submanifold. The specific diffeomorphism $\vph$ pulls back $G_1\in\Mab$ to $G_0\in\Mab$, and $\Mab$ is connected and complete, 
so $\vph$ must pull back $\Mab$ to $\Mab$ itself.
In particular, given a solution of the flow as above, we know that $\vph^*g(t)\in\Mab$ for each $t\in [0,\infty)$.

Theorem \ref{thm:main} immediately yields the following corollary about the possible behaviour of Teichm\"uller harmonic map flow into suitable smooth closed targets:

\begin{cor}
	The limit of solutions of \teich harmonic map flow as $t\to \infty$ can be non-unique, even after pull-back by diffeomorphisms. 
\end{cor}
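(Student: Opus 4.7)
The plan is to derive this corollary directly from Theorem \ref{thm:main}. First I would pick any two distinct parameters $z_1,z_2\in[0,1)$ and apply Theorem \ref{thm:main} to obtain a smooth closed target $(N,g_N)$, initial data $(u_0,g_0)$, and the resulting global flow $(u(t),g(t))$, together with sequences of times $t_i^{z_1}\to\infty$ and $t_i^{z_2}\to\infty$ along which the pull-backs by the common iterates $\varphi^i$ converge smoothly to the minimal immersions $u_{z_1}$ and $u_{z_2}$ respectively, with limiting metrics $G_{z_1}$ and $G_{z_2}$.

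Next I would exploit the assertion in Theorem \ref{thm:main} that $u_{z_1}$ and $u_{z_2}$ parametrise geometrically distinct minimal surfaces in $(N,g_N)$. Since the image $u(t)(M)\subset N$ is invariant under pre-composition of $u(t)$ with any diffeomorphism of $M$, any alternative choice of pull-back diffeomorphisms $\psi_i\colon M\to M$ along which $u(t_i^{z_k})\circ\psi_i$ still converges smoothly must yield a limit whose image in $N$ coincides with $u_{z_k}(M)$. Because these two images are distinct subsets of $N$, no choice of pull-back diffeomorphisms can reconcile the two subsequential limits.

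This exhibits two genuinely different subsequential limits of the flow as $t\to\infty$, even after allowing pull-back by diffeomorphisms, which is precisely the non-uniqueness claimed by the corollary. No real obstacle arises: the corollary is essentially a repackaging of the second bullet point of Theorem \ref{thm:main}, and the only task is to articulate its geometric content by observing that the invariance of $u(t)(M)$ under reparametrisation of the domain rules out collapsing the two limits to a single one by any clever choice of diffeomorphisms.
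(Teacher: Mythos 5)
Your proof is correct and matches the paper's treatment: the paper states this corollary as an immediate consequence of Theorem \ref{thm:main} without writing out a proof, and you have correctly supplied the missing (short) argument, in particular the key observation that the image $u(t)(M)\subset N$ is invariant under precomposition by any diffeomorphism of the domain, so the distinctness of the minimal surfaces $u_{z_1}(M)\neq u_{z_2}(M)$ asserted in the theorem rules out reconciling the two subsequential limits by any choice of pull-backs.
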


%

The diffeomorphisms $\varphi^i$ that we use to pull back in \eqref{claim:main-thm-map} are iterated compositions of the diffeomorphism $\varphi$ obtained from $G_s$. 
One possibility that we consider is that $G_s$ is a periodic curve, as is the case when $\varphi$ is the identity, for example.
A quite different situation arises in the case that $G_s$ leaves any compact subset of \teich space. In particular, $\varphi$ can represent a Dehn twist.

In this second case we obtain the following:

\begin{cor} \label{cor:metric-diverges}
	There exist a closed target manifold and a solution of \teich harmonic map flow from $T^2$ into that target whose metric component leaves any compact subset of Teichm\"uller space even though the injectivity radius of the domain remains bounded away from zero and hence the metric component remains in a compact subset of moduli space. 
\end{cor}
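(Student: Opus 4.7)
My plan is to derive the corollary from Theorem \ref{thm:main} by selecting the curve $(G_s)$ so that the associated ``return'' diffeomorphism $\varphi$ is a Dehn twist on $T^2$. Identify $\Mab$ with $\Hyp$ via $g_{a,b}\leftrightarrow a+ib$, so that the pullback action of $\mathrm{MCG}(T^2)\cong SL(2,\Z)$ on $\Mab$ corresponds to the standard modular action on $\Hyp$ by hyperbolic isometries. I would take $\varphi$ to be a Dehn twist representing the infinite-order parabolic element $A=\bigl(\begin{smallmatrix}1&1\\0&1\end{smallmatrix}\bigr)\in SL(2,\Z)$. Next, choose a smooth curve $\gamma\colon[0,1]\to\Hyp$ from a basepoint $z_0$ to its image under $A$, arranging all derivatives to match at the endpoints upon periodic gluing, and extend this data to a smooth curve $(G_s)_{s\ge0}\subset\Mab$ satisfying $G_s=\varphi^*G_{s+1}$ by applying $A^n$ on the interval $[n,n+1]$. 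The projection of $(G_s)$ to moduli space is then $1$-periodic by construction, so Theorem \ref{thm:main} applies and produces a target $(N,g_N)$ and a flow $(u(t),g(t))$ with $\dwp(g(t),G_{z(t)})\to 0$ for some $z(t)\to\infty$.

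Two conclusions then follow. First, since $A$ has infinite order, the iterates $A^n\cdot z_0$ tend to the cusp at infinity in $\Hyp$ and their hyperbolic distance from $z_0$ diverges; because $A$ acts by hyperbolic isometries and $\gamma$ has bounded diameter, the full curve $G_s$ itself leaves every compact subset of $\Mab\cong\teich$ space. The Weil-Petersson metric on $\Mab$ being a constant multiple of the hyperbolic metric on $\Hyp$, the bound $\dwp(g(t),G_{z(t)})\to 0$ forces $g(t)$ to leave every compact subset of \teich space as well. Second, the projection of $(G_s)$ to moduli space $\Hyp/SL(2,\Z)$ is $1$-periodic and therefore contained in a compact set $K$; because the quotient $\Mab\to\Mab/\mathrm{MCG}$ by the isometric $SL(2,\Z)$-action is $1$-Lipschitz in the induced Weil-Petersson metric, $g(t)$ also projects into a compact subset of moduli space. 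Since the unique cusp of the moduli space of $T^2$ corresponds precisely to the systole (equivalently, the injectivity radius) shrinking to zero, this yields the required uniform lower bound $\inj(T^2,g(t))\ge c>0$.

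The main obstacle has really already been absorbed into Theorem \ref{thm:main}: once that deep analytic input is in place, the corollary reduces to the combinatorial choice of a Dehn twist together with standard facts about the modular action of $SL(2,\Z)$ on $\Hyp$ and the isometric identification of the Weil-Petersson and hyperbolic metrics. The only minor technical point is arranging $\gamma$ to glue $C^\infty$-smoothly with its $A$-translates, which is a routine interpolation using cutoffs near the endpoints.
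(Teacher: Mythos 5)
Your proposal is correct and matches the paper's intended argument: the paper derives this corollary immediately from Theorem~\ref{thm:main} by choosing $\varphi$ to be a Dehn twist, exactly as you do, and the remaining details (the identification of $\Mab$ with $\Hyp$, the parabolic $SL(2,\Z)$-action, divergence in $\Hyp$ but compactness of the projection to moduli space) are the standard facts you cite. The only slight imprecision is the phrase ``tend to the cusp at infinity in $\Hyp$'': what you really need, and what is true, is that the hyperbolic distance $d_{\Hyp}(z_0, A^n z_0) = d_{\Hyp}(z_0, z_0+n) \to \infty$, so the translates escape every compact subset of $\Hyp$, while in moduli space $\Hyp/SL(2,\Z)$ they all project to the same point and hence remain far from the (genuine) cusp; your subsequent sentence already makes this distinction correctly, so this is a matter of wording rather than substance.
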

When the curve $G_s$ is a non-trivial closed curve we obtain the following:
\begin{cor}
	There exist a closed target manifold and a solution
of \teich harmonic map flow  from $T^2$ into that target for which the metric component stays in a compact subset of \teich space but for which the limit as $t\to \infty$ is not unique. In particular we can choose sequences of times $t_i,\tilde{t}_i \to \infty$ so that $(u(t_i),g(t_i))$ and $(u(\tilde{t}_i),g(\tilde{t}_i))$ converge (without having to pull back by diffeomorphisms) to limits which parametrize different minimal surfaces. 
\end{cor}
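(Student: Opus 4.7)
The plan is to deduce this corollary from Theorem~\ref{thm:main} by choosing the input curve $(G_s)_{s\geq 0}$ so that the defining periodicity $G_s = \vph^* G_{s+1}$ is satisfied with the identity diffeomorphism $\vph = \id$. Concretely, I would take $G_s$ to be any smooth, non-constant, 1-periodic loop in $\Mab$; under the identification of $(\Mab,\dwp)$ with (a rescaling of) the hyperbolic upper-half plane, one may simply parametrise a small embedded closed loop in $\Hyp$ with unit period. Any such choice trivially verifies the hypothesis of Theorem~\ref{thm:main}.

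Applying Theorem~\ref{thm:main} to this data then yields a closed target $(N,g_N)$ and a solution $(u(t),g(t))$ of the flow. The first thing I would check is that the metric component stays in a compact subset of \teich space: since $G$ is continuous and 1-periodic, the image $G([0,1])$ is a compact subset of $\Mab$, and \eqref{claim:main-thm-metric} then forces $g(t)$ to lie in an arbitrarily small Weil--Petersson neighbourhood of $G([0,1])$ for $t$ sufficiently large. Combined with smoothness of $g$ on any bounded initial time interval, this gives the required compactness.

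To extract the non-uniqueness, I would exploit the fact that with $\vph = \id$ the iterates $\vph^i$ are all trivial, so the pullbacks in \eqref{claim:main-thm-map} disappear and the conclusion of the theorem reduces to $u(t_i^z) \to u_z$ and $g(t_i^z) \to G_z$ with no diffeomorphism needed. Picking any two distinct values $z_1, z_2 \in [0,1)$ and setting $t_i := t_i^{z_1}$, $\tilde t_i := t_i^{z_2}$ then produces sequences along which $(u,g)$ converges to $(u_{z_1},G_{z_1})$ and $(u_{z_2},G_{z_2})$, and the theorem asserts that $u_{z_1}$ and $u_{z_2}$ parametrise distinct minimal surfaces in $N$.

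I do not foresee a genuine obstacle: the argument is essentially bookkeeping on top of Theorem~\ref{thm:main}. The only real point is the observation that insisting the loop $G_s$ close up already in $\Mab$, rather than only in moduli space, allows $\vph$ to be chosen trivial, which is precisely what removes the need to pull back by diffeomorphisms in the conclusion.
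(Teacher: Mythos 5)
Your proposal is correct and matches the paper's intended argument: take $\vph=\id$, so that the 1-periodicity hypothesis is met by a genuine closed loop $G_s$ in $\Mab$, at which point the diffeomorphism pullbacks in Theorem~\ref{thm:main} disappear and \eqref{claim:main-thm-metric} together with compactness of $G([0,1])$ in $\Mab\cong\Hyp$ keeps $g(t)$ in a compact subset of Teichm\"uller space. (One minor remark: you do not actually need $G$ to be non-constant, since the theorem already guarantees that $u_{z_1}$ and $u_{z_2}$ with $z_1\neq z_2$ have disjoint images in $N$ regardless of the curve chosen; but insisting on it is harmless.)
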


While the above results show that solutions of Teichm\"uller harmonic map flow into smooth closed targets can exhibit winding behaviour even in situations where the energy density is bounded uniformly from above and the injectivity radius from below, our second main result excludes this behaviour for analytic targets. 
Recall that one starts the analysis of the asymptotics of the flow by selecting a sequence of times $t_j\to\infty$ with 
$\norm{\partial_t(u,g)(t_j)}_{L^2(M,g(t_j))}\to 0$
and then analyses the bubbling and/or collar degeneration singularities that might occur in the limit \cite{RT2016,RTZ,HRT}.
If there exists any such sequence for which no singularities occur, when the target is analytic, then we obtain uniform convergence:
\begin{thm}\label{thm:analytic}
Let $(N,g_N)$ be a closed analytic manifold of any dimension and let $M$ be a closed oriented surface of genus $\ga\geq 1$. Let $(u,g)$ be any global weak solution of Teichm\"uller harmonic map flow \eqref{eqn:flow},
with nonincreasing energy, for which there is a sequence $t_j \to \infty$ such that
\beq
\label{ass:map-metric-converg}
\lim_{j\to\infty} \norm{\partial_t(u,g)(t_j)}_{L^2(M,g(t_j))}=0, \quad \sup_{j} \norm{\na u(t_j)}_{L^\infty(M,g(t_j))}<\infty \text{ and } \inf_{j}\inj(M,g(t_j))>0.\eeq
Then 
$(u,g)(t)$ converges smoothly as $t\to \infty$ to a limiting pair $(u_\infty,g_\infty)$ consisting of a metric $g_\infty\in \M_c$ and a weakly conformal harmonic map ${u_\infty \colon (M,g_\infty)\to (N,g_N)}$.
\end{thm}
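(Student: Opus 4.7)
The plan is to implement the standard \L{}ojasiewicz--Simon uniqueness-of-limits strategy from \cite{Simon1983}, adapted to the coupled energy functional $E(u,g)$. First I would use the assumptions along $t_j$ together with Mumford compactness on $\M_c$ to extract, after pulling back by suitable orientation-preserving diffeomorphisms $\psi_j$, smooth limits $\psi_j^*g(t_j)\to g_\infty\in \M_c$ and $u(t_j)\circ \psi_j\to u_\infty$ along a subsequence. The condition $\|\partial_t(u,g)(t_j)\|_{L^2}\to 0$ forces $\tau_{g_\infty}(u_\infty)=0$ and $P_{g_\infty}(\Phi(u_\infty,g_\infty))=0$, so $u_\infty\colon (M,g_\infty)\to (N,g_N)$ is a weakly conformal harmonic map. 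Since \teich harmonic map flow is covariant under pull-back by a fixed diffeomorphism, one can arrange that the subsequential convergence holds without any further pull-back after replacing the flow by its pull-back under an appropriately chosen fixed $\psi$.

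Next I would establish a \L{}ojasiewicz--Simon inequality at $(u_\infty,g_\infty)$. Analyticity of $(N,g_N)$ makes $E$ a real-analytic functional on a suitable Sobolev/H\"older chart around $(u_\infty,g_\infty)$ once one fixes a slice transverse to the action of the diffeomorphism group through $g_\infty$. Its second variation is Fredholm: the map component is the Jacobi operator of the harmonic map $u_\infty$, and the metric component reduces to the finite-dimensional space of holomorphic quadratic differentials via $P_g$. Standard \L{}ojasiewicz--Simon theory then gives constants $\theta\in(0,\tfrac12]$ and $C,\sigma>0$ such that
\begin{equation*}
\bigl|E(u,g)-E(u_\infty,g_\infty)\bigr|^{1-\theta}\leq C\bigl(\|\tau_g(u)\|_{L^2(M,g)}+\|P_g(\Phi(u,g))\|_{L^2(M,g)}\bigr)
\end{equation*}
on a $C^{2,\alpha}$-neighborhood of $(u_\infty,g_\infty)$ modulo gauge. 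Combined with \eqref{eqn:energy-decay} and the flow equations, this produces the classical differential inequality $-\tfrac{d}{dt}(E(t)-E_\infty)^\theta\geq c\,\|\partial_t(u,g)\|_{L^2}$ on any interval during which $(u(t),g(t))$ lies in this neighborhood. A continuation argument starting from some $t_j$ close to the critical point, together with the time-integrability provided by \eqref{cherry}, then yields $\int_{t_j}^\infty\|\partial_t(u,g)\|_{L^2}\,dt<\infty$, and parabolic regularity upgrades the resulting $L^2$-in-time control to smooth convergence $(u(t),g(t))\to (u_\infty,g_\infty)$.

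The main obstacle will be setting up the \L{}ojasiewicz--Simon framework in a way compatible with the somewhat unusual gauge structure of \teich harmonic map flow: the metric evolves only in the horizontal direction $\mathrm{Re}\,P_g(\Phi)$, while the map has no comparable gauge fixing, so one has to choose the slice for the diffeomorphism action carefully and track the small time-dependent diffeomorphism corrections needed to transport the flow into that slice without destroying the analyticity of the energy functional. A secondary but essential technical task is propagating the uniform bounds on $\|\nabla u\|_{L^\infty}$ and $\inj(M,g)$ from the discrete times $t_j$ to short subsequent intervals $[t_j,t_j+T]$ so that the differential inequality can actually be integrated; this should follow from standard $\varepsilon$-regularity for almost harmonic maps and parabolic smoothing combined with the fact that $(u(t_j),g(t_j))$ is already close to a smooth critical configuration.
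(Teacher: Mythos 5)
Your overall strategy matches the paper's proof: extract a subsequential limit via compactness, establish a \L{}ojasiewicz--Simon inequality at the limiting critical point $(u_\infty,g_\infty)$ in a gauge-fixed slice, derive the differential inequality for $(E(t)-E_\infty)^\theta$, and close with a continuation argument. The paper's Theorem~\ref{thm:loj-ineq} is exactly the \L{}ojasiewicz--Simon inequality you invoke, with the slice for the diffeomorphism action provided by Tromba's slice theorem, precisely addressing the gauge obstacle you flag as the main difficulty.

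However, your sketch glosses over the step that in fact carries the weight of the proof: showing that the pulled-back flow $(\tilde u_j,\tilde g_j)=\psi_j^*(u,g)$ remains in the $H^s\times\M_c^s$ neighbourhood $\mathcal O$ on all of $[t_j,\infty)$ once it enters it. The \L{}ojasiewicz differential inequality on its own only gives control of $\int\|\partial_t(u,g)\|_{L^2}\,dt$, i.e.\ $L^2$-closeness, whereas the neighbourhood where \eqref{eqn:loj-ineq} is valid is an $H^s$ neighbourhood. Your invocation of \eqref{cherry} is of no help here (it gives a bound growing like $\sqrt{t_1-t_0}$, not a finite tail), and the phrase \emph{``parabolic regularity upgrades the resulting $L^2$-in-time control to smooth convergence''} presupposes that the continuation argument has already succeeded. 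What is actually needed is: (i) for the metric, the horizontal-curve estimates of \cite{RT2018} together with the injectivity radius bound, which promote $L^2$ control of $\partial_t g$ to $C^k$ control, so that $L^2$-length control integrates directly to $C^s$ (hence $H^s$) closeness; and (ii) for the map, $\eps$-regularity/parabolic theory giving a uniform $C^{s+1}$ bound on $[t_j,T_j)$, which must then be \emph{interpolated} against the $L^2$-closeness via Ehrling's lemma to recover $H^s$-closeness with a small constant. You also frame the propagation of $\|\nabla u\|_{L^\infty}$ and $\inj$ as a matter of ``short subsequent intervals $[t_j,t_j+T]$,'' but the propagation has to reach all of $[t_j,T_j)$, which is potentially infinite, and this is exactly what the interpolation makes possible. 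Relatedly, your claim that one can arrange the subsequential convergence ``without any further pull-back after replacing the flow by its pull-back under an appropriately chosen fixed $\psi$'' is premature: a priori the $\psi_j$ need not stabilize (cf.\ the winding examples of Theorem~\ref{thm:main}), and it is only \emph{after} the continuation argument shows $T_J=\infty$ for some $J$ that one may fix $\psi=\psi_J$ once and for all.
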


We are making the restriction that $\ga\geq 1$ in this paper. One can make sense of 
the flow in the case $\ga=0$, but it reduces to the classical harmonic map flow and the theorem above then already follows from \cite{Simon1983}.

\begin{rem}\label{rem:weaker-energy-ass}
The assumption in \eqref{ass:map-metric-converg} can be weakened. It is  enough to ask that there exist $r_1>0$ and $t_j\to \infty$ so that $\inf_{j}\inj(M,g(t_j))>0$ and so that we have uniform control on the energy on balls of radius $r_1$ of
\beq
\label{est:energy-small}
\half \int_{B^{g(t_j)}_{r_1}(x)} \abs{du(t_j)}^2_{g(t_j)} dv_{g(t_j)}\leq \eps_0 \text{ for every } x\in M \text{ and } j\in \N
\eeq
where $\eps_0=\eps_0(N,g_N)>0$, since parabolic regularity theory and \eqref{eqn:energy-decay} would allow us to choose nearby times for which \eqref{ass:map-metric-converg} holds, as can be 
derived from the proof of Theorem \ref{thm:analytic}.
\end{rem}

The structure of the paper is as follows. In Section \ref{example_sect} we prove Theorem \ref{thm:main} in two steps, first constructing an auxiliary non-compact target
for which the map component drifts off to infinity while the metric component closely follows the prescribed curve $G_s$ and then in a second step  
constructing the desired closed target, which will contain the auxiliary target as a totally geodesic submanifold, for which the flow exhibits the claimed winding behaviour. 
The proof of our second main result  Theorem \ref{thm:analytic} on the asymptotic convergence of the flow into analytic targets is then carried out in Section \ref{sect:3} and is based on a \L{}ojasiewicz-Simon inequality for the Dirichlet energy on the set of pairs of maps and metrics that we state in Theorem \ref{thm:loj-ineq}.

\section{Proof of Theorem \ref{thm:main}}
\label{example_sect}

Our set-up is inspired by the construction of a target by the third author in \cite{Topping04} for which solutions of harmonic map flow exhibit winding behaviour. Our first step here will be to construct a non-compact target manifold $N_0$ for which the metric component of the flow \eqref{eqn:flow} closely follows the prescribed curve of metrics $G_s$ as the image of the map drifts off to infinity. We will then wrap this non-compact target around a cylinder to obtain a smooth, closed target for which solutions wind around the cylinder and approach a circle of critical points. 

The first, non-compact, target is given by $N_0 = \R\times T^2$ equipped with the metric
\begin{equation} \label{def:line-cross-torus-metric}
	g_{N_0} = dz^2 + f_0(z)G_z
\end{equation}
where $G_s$ is the prescribed curve of metrics in $\Mab$ (extended to $(-\infty,0]$ via $G_s=\varphi^*G_{s+1}$) 
 and $f_0 \in C^\infty (\R, [1,\infty))$ is bounded with $-C\leq f_0'<0$ and $\lim_{z\to \infty} f_0(z)=1$. 

We will consider maps $u\colon T^2\to N_0$ whose first component is constant in space and whose second component is given by the identity map of the torus. The energy of such maps $u=(z,\id)$ is given by 
\begin{equation}\label{eqn:energy-structure-line}
	E(u, g) = f_0(z) \Eid(g,G_z) \text{ where } \Eid(g,G):= E(\id \colon (T^2,g) \to (T^2,G))
\end{equation}
for $g, G\in \Mab$ and we always have 
$\Eid(g,G)\geq \text{Area}(T^2,G)=1$, with equality if and only if $\id: (T^2,g)\to (T^2,G)$ is conformal. As metrics in $\Mab$ are unique in their conformal class we thus have 
\beq 
\label{est:energy-equal-f}
 E((z,\id),g)\geq f_0(z) \text{ with equality  if and only if } g=G_z.\eeq
Since the identity map $\id \colon (T^2,g) \to (T^2,G)$ is harmonic for any metrics $g, G \in \Mab$, we have that for each $z\in\R$ the $T^2$ component of the tension of any map 
$u=(z,\id)$ from $(T^2,g)$ to $(N_0,g_{N_0})$ vanishes, while the first component of the tension is given by 
\begin{equation} \label{eqn:line-cross-torus-tension}
	\tau_g(u)^\R = -f_0(z) D_{G}\Eid_{(g,G_z)}(\tfrac{d G_z}{dz} ) - f_0'(z)\Eid(g,G_z).
\end{equation}

\begin{rem} \label{rem:no-crit-point}
If $\bar g\in\Mab$ and $u=(z,\id)\colon (T^2,\bar g)\to (N_0,g_{N_0})$ is conformal, 
then we have $\bar g=G_{z}$. 
Hence the first term on the right-hand side of  \eqref{eqn:line-cross-torus-tension} vanishes while the second term is always non-zero as $f_0'\neq 0$. We thus observe that there are no maps $u$ of this form that are both harmonic and conformal. 
\end{rem}

Although our target $N_0$ is non-compact, and the existing theory for this flow is phrased for compact targets, the short-time existence
for an initial map of the form $u_0=(z_0,\id)$ and any initial metric $g_0\in \Mab$ will follow from simple ODE theory.
Recall from \cite{DLL2006} that the equation for the metric component $g(t) = g_{(a,b)(t)}$ reduces to a system of ODEs for $(a,b) \in \Hyp$. Standard ODE theory gives short-time existence of a unique solution to the system consisting of the ODEs for $(a,b)(t)$ coupled with the equation for $z(t)$
\begin{equation} \label{eqn:line-map-ode}
\dot{z}  = \tau_g(u)^\R = -f_0(z) D_{G}\Eid_{(g,G_z)}(\tfrac{d G_z}{dz} ) - f_0'(z)\Eid(g,G_z),
\end{equation}
where $g = g_{(a,b)}$. Therefore setting $u(t) = (z(t),\id)$ and $g(t) = g_{(a,b)(t)}$ we obtain that $(u,g)$ is a solution to the flow.

We will now show that the solution exists for all times and does not degenerate, as well as control the Weil-Petersson distance between the metric component of the flow and the prescribed curve of metrics $G_s$.

\begin{lemma} \label{lem:line-cross-torus-sol}
Let $(N_0,g_{N_0})$ be as in \eqref{def:line-cross-torus-metric} with $G_s$ the prescribed curve of metrics as in Theorem \ref{thm:main}  and let $f_0 \in C^\infty (\R, [1,\infty))$ be bounded with $-C\leq f_0'<0$ and $\lim_{z\to \infty} f_0(z)=1$.

Given any $z_0\in\R$ and any $g_0\in \Mab$ we have that the solution $(u,g)=((z,\id),g)$ of the flow \eqref{eqn:flow} with initial map $u_0=(z_0,\id)$ and initial metric $g_0$ exists and is smooth for all times, 
and does not degenerate at infinite time in the sense that 
	\begin{equation} \label{claim:does-not-deg}
		 \inj(T^2,g(t)) \geq c>0 \text{ for all } t\in [0,\infty).
	\end{equation}
Here $c$ depends only on an upper bound for the initial energy and on the curve $G_s$. Furthermore, the Weil-Petersson distance between the metric component and the given curve of metrics $G_s$ is controlled by
	\begin{equation}\label{claim:metric-dist-bd}
		\dwp (g(t),G_{z(t)}) \leq C (E(t)-1)^{\frac{1}{2}} \text{ for every } t\in [0,\infty),
	\end{equation}
	where $E(t) := E(u(t),g(t))$ and $C>0$ is a universal constant.

\end{lemma}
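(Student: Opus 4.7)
The plan is to exploit the reduction of the flow to a system of ODEs on the finite-dimensional manifold $\Hyp\times\R$ parametrising $(a,b,z)$, combined with an explicit closed-form expression for $\Eid(g_1,g_2)$ in terms of the hyperbolic distance between the corresponding points $\tau_j=a_j+ib_j\in\Hyp$. Writing $g_{(a,b)}$ as a $2\times 2$ matrix from the definition $g_{(a,b)}=T_{a,b}^*g_E$ and using $|\mathrm{d}\id|^2_{g_1}=\tr(g_1^{-1}g_2)$, a direct computation yields the pointwise-constant density $\frac{(a_1-a_2)^2+b_1^2+b_2^2}{b_1b_2}$, and, since $(T^2,g_1)$ has unit area, the key identity
\begin{equation*}
\Eid(g_1,g_2)=1+\frac{|\tau_1-\tau_2|^2}{2b_1b_2}=\cosh(d_\Hyp(\tau_1,\tau_2)).
\end{equation*}

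The bound \eqref{claim:metric-dist-bd} then follows from the elementary inequality $\cosh(x)-1\geq x^2/2$. Indeed, since $f_0\geq 1$,
\begin{equation*}
E(t)-1=f_0(z)\bigl(\Eid(g,G_z)-1\bigr)+(f_0(z)-1)\geq \Eid(g(t),G_{z(t)})-1,
\end{equation*}
so, using that $\dwp$ is proportional to $d_\Hyp$ on $\Mab$, we obtain $\dwp(g(t),G_{z(t)})^2\leq C(E(t)-1)$.

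For the injectivity radius bound \eqref{claim:does-not-deg}, monotonicity of $E$ gives $\cosh d_\Hyp(g(t),G_{z(t)})=\Eid(g(t),G_{z(t)})\leq E(0)$, so $g(t)$ stays within a fixed hyperbolic distance of the curve $G_z$. The 1-periodic projection of $G_s$ to moduli space provides, for each $s$, an element $\ga_s\in\mathrm{SL}(2,\Z)$ (the mapping class group of $T^2$, acting on $\Hyp$ by hyperbolic isometries) with $\ga_s\cdot G_s$ in a fixed compact fundamental domain; consequently $\ga_{z(t)}\cdot g(t)$ lies in a compact subset $K\subset\Hyp$ independent of $t$. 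Since the injectivity radius is mapping-class-group invariant and continuous on $\Hyp$, $\inj(T^2,g(t))=\inj(T^2,\ga_{z(t)}\cdot g(t))\geq \min_K\inj>0$.

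Finally, long-time existence reduces to ruling out finite-time blow-up of the ODE for $(a,b,z)$. For the metric component, the cherry-type estimate \eqref{cherry} (whose derivation only uses the energy identity and the formula for $\partial_t g$) bounds the WP-length of $g|_{[0,T]}$ by $C(\eta)[E(0)]^{1/2}T^{1/2}$, and WP-completeness of $\Mab$ keeps $(a,b)(T)$ in the interior of $\Hyp$. For $z$: differentiating the key identity in its second argument gives $|D_G\Eid|_{\mathrm{WP}}=\sinh(d_\Hyp)\leq\Eid\leq E(0)$, while the $\vph$-equivariance $G_s=\vph^*G_{s+1}$ together with the isometric action of $\vph^*$ on the WP metric makes $|\tfrac{dG_z}{dz}|_{\mathrm{WP}}$ 1-periodic in $z$, hence uniformly bounded. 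Combined with boundedness of $f_0$ and $f_0'$, equation \eqref{eqn:line-map-ode} yields $|\dot z|\leq C$ on $[0,T]$, so standard ODE theory extends the solution to $[0,\infty)$ with smoothness inherited from smooth dependence of the right-hand side. The main technical point is the explicit identification $\Eid=\cosh(d_\Hyp)$; once in hand, all three claims reduce to elementary hyperbolic geometry and routine ODE arguments.
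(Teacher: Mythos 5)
Your proof is correct, and in two places it takes a genuinely different route from the paper's argument, both times by leaning harder on the explicit identity $\Eid(g_1,g_2)=\cosh(d_\Hyp(\tau_1,\tau_2))$.

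For the injectivity-radius bound \eqref{claim:does-not-deg}, the paper invokes Remark 4.4 of \cite{DLL2006}: since $G_s$ is periodic in moduli space the target carries a uniform positive lower bound on lengths of homotopically non-trivial closed curves, and then incompressibility of $u_0$ gives the lower bound on $\inj(T^2,g(t))$. You instead combine $f_0\geq 1$ with monotonicity of the energy to get $\cosh d_\Hyp(g(t),G_{z(t)})=\Eid(g(t),G_{z(t)})\leq E(0)$, and then use that the projection of $G_s$ to moduli space is $1$-periodic hence compact; after translating by the mapping class group the metrics $g(t)$ land in a fixed compact set of $\Hyp$, on which $\inj$ has a positive minimum. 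This is more elementary and stays entirely inside the explicit $(a,b)$-parametrisation. The trade-off is that the paper's argument relies only on the monotonicity structure and the target geometry, while yours exploits the particular form $u(t)=(z(t),\id)$ — but that is exactly the situation of the lemma, so nothing is lost.

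For ruling out finite-time blow-up of $z$, the paper argues by compactness: on any finite time interval, the WP-length bound \eqref{cherry} together with \eqref{claim:metric-dist-bd} confines $g(t)$ and $G_{z(t)}$ to a compact set $K\subset\Mab$, and the ODE right-hand side \eqref{eqn:line-map-ode} is then bounded on $K$. You instead differentiate the cosh identity to get $|D_G\Eid_{(g,G)}(\xi)|\leq\sinh(d_\Hyp(g,G))\,|\xi|_{d_\Hyp}$ and observe that $\sinh\leq\cosh=\Eid\leq E(0)$ while $|\tfrac{dG_z}{dz}|_{\mathrm{WP}}$ is $1$-periodic hence uniformly bounded, which together with $f_0$, $f_0'$ bounded yields a \emph{global} bound $|\dot z|\leq C$. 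This is again more explicit and actually gives a slightly stronger conclusion (uniform rather than finite-time bound on $|\dot z|$). Your derivation of \eqref{claim:metric-dist-bd} itself matches the paper's use of Lemma \ref{lem:metric-dist-bd}, the energy decomposition \eqref{eqn:energy-structure-line} and $f_0\geq 1$; and your appeal to \eqref{cherry} plus WP-completeness of $\Mab$ to handle the metric component is exactly the paper's argument.
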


Remark \ref{rem:no-crit-point} excludes the possibility that 
this flow has stationary points $(\bar u,\bar g)$ of the above form.
This will later allow us to show that 
the map component of the flow drifts off to infinity while the energy tends to $1$ as $t\to \infty$. By the above lemma this then yields that 
 $g(t)$ and $G_{z(t)}$ have the same limiting behaviour.

For the proof of this lemma we use the following elementary property of the energy $\Eid$ of the identity map between different flat unit area tori, a proof of which we include for the convenience of the reader.

\begin{lemma} \label{lem:metric-dist-bd}
	There is a universal constant $C>0$ such that for any $g, G \in \Mab$ we have
	\begin{equation}
		\dwp (g,G) \leq C(\Eid(g,G) - 1)^{\frac{1}{2}}
	\end{equation}
	where $\Eid(g,G) = E(\id \colon (T^2,g) \to (T^2,G))$ and $\dwp $ is the Weil-Petersson distance. 
\end{lemma}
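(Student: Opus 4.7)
The plan is to compute the energy $\Eid(g,G)$ explicitly in the parameters $(a,b),(a',b')\in\Hyp$ for $g=g_{a,b}$ and $G=g_{a',b'}$, and then compare the resulting expression with the formula for the hyperbolic distance on $\Hyp$. First, I would write down the matrix of $g_{a,b}$ in the standard basis $(\partial_x,\partial_y)$ of $T\T^2$ by pushing the vectors through $T_{a,b}$. A direct calculation gives
\beqs
g_{a,b}=\tfrac{1}{b}\begin{pmatrix}1 & a\\ a & a^2+b^2\end{pmatrix},
\eeqs
which has unit determinant as expected. Since $\Eid(g,G)=\half\int_{T^2}\tr_g G\,dv_g$ and both metrics have unit area, the energy is simply $\half\tr(g^{-1}G)$, and a straightforward computation yields
\beqs
\Eid(g_{a,b},g_{a',b'})=\frac{(a-a')^2+b^2+b'^2}{2bb'}.
\eeqs

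Next, I would observe that subtracting $1$ from this expression produces exactly the quantity that appears in the standard formula for the hyperbolic distance on $\Hyp$, namely
\beqs
\Eid(g,G)-1=\frac{(a-a')^2+(b-b')^2}{2bb'}=\half\bigl(\cosh d_{\Hyp}(\tau,\tau')-1\bigr),
\eeqs
where $\tau=a+ib$, $\tau'=a'+ib'$ and $d_\Hyp$ is the hyperbolic distance on $\Hyp$. The elementary inequality $\cosh x-1\geq \half x^2$, valid for all $x\geq 0$, then gives $d_\Hyp(\tau,\tau')\leq 2(\Eid(g,G)-1)^{1/2}$. Finally, since the Weil--Petersson metric on $\Mab$ is, up to a universal multiplicative constant, the pullback of the hyperbolic metric under the identification $\Mab\cong\Hyp$, the claimed bound $\dwp(g,G)\leq C(\Eid(g,G)-1)^{1/2}$ follows.

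There is really no difficult step here: the argument is essentially bookkeeping, with the only mild subtlety being to ensure that the inequality holds globally on $\Mab\times\Mab$ and not merely in a neighbourhood of the diagonal. This is handled automatically by the fact that $\cosh x-1\geq\half x^2$ holds for all $x$, so the final bound is valid even when $g$ and $G$ are very far apart in $\Mab$. If one preferred, an alternative route would be to integrate along the hyperbolic geodesic from $\tau$ to $\tau'$ and estimate the Weil--Petersson length using the infinitesimal version $\|\dot g\|_{WP}\leq C\,(d/dt)\Eid(g,g(t))^{1/2}$ obtained by differentiating $\Eid$ along the path, but the closed-form computation above is cleaner.
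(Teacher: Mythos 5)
Your proposal is essentially the same as the paper's proof: compute $\Eid$ explicitly in the $(a,b)$ coordinates, relate it to the hyperbolic distance on $\Hyp$ via the standard formula, and finish with an elementary estimate (your $\cosh x - 1 \geq \tfrac12 x^2$ is equivalent to the paper's $\arcosh x \leq \sqrt{2}(x-1)^{1/2}$). One small arithmetic slip: the standard formula gives $\cosh d_\Hyp(\tau,\tau') - 1 = \frac{(a-a')^2+(b-b')^2}{2bb'}$, so in fact $\Eid(g,G) = \cosh d_\Hyp(\tau,\tau')$ exactly, not $\Eid - 1 = \tfrac12(\cosh d_\Hyp - 1)$; this only changes the universal constant and does not affect the conclusion.
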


\begin{proof}
	Since $g,G \in \Mab$ we can write $g = g_{a,b}$ and $G = g_{\alpha,\beta}$ for some $(a,b), (\alpha,\beta) \in \Hyp$, as in  \eqref{def:Mab}. The energy of the identity map written in terms of these parameters is given by
	\begin{equation} \label{eqn:energy-id1}
		\Eid(g_{a,b},g_{\alpha,\beta}) = \frac{1}{2}\int_{T^2} \tr_{g_{a,b}}(g_{\alpha,\beta}) dv_{g_{a,b}}= 1 + \frac{1}{2 b \beta}(a - \alpha)^2 + \frac{1}{2 b \beta}(b - \beta)^2.
	\end{equation}
	This formula connects the quantity $\Eid$ to the hyperbolic distance $d_\Hyp((a,b),(\alpha,\beta))$ on the upper half plane $\Hyp$, indeed we find that $\Eid(g_{a,b},g_{\alpha, \beta}) = \cosh(d_\Hyp(g_{a,b},g_{\alpha, \beta}))$. The Weil-Petersson metric on $\Mab$ is a multiple of the hyperbolic distance, $\dwp  = 2d_\Hyp$, and so
	\begin{equation*}
		\dwp (g,G) = 2\arcosh(\Eid) \leq 2\sqrt{2}(\Eid - 1)^\frac{1}{2}
	\end{equation*}
	using that $\arcosh(x) \leq \sqrt{2}(x-1)^\frac{1}{2}$ for all $x > 1$.
\end{proof}

\begin{proof}[Proof of Lemma \ref{lem:line-cross-torus-sol}]
Let $(u,g)$ be a solution of the flow as in the lemma and let $[0,T)$ be the maximal time  interval on which the solution is defined and smooth. 
There can be no finite-time degeneration of the metric component owing to the completeness of the \teich space of the torus $T^2$, see \cite{DLL2006} Corollary 2.3.  
To show that $T=\infty$ it hence remains to exclude the possibility that $\abs{z(t)}$ goes to infinity in finite time, which we shall do at the end of the proof.

Since $G_s$ is smooth and periodic in moduli space, we have a uniform positive lower
  bound on the lengths of homotopically non-trivial closed curves in the target. Since the initial map is incompressible this suffices to apply 
the argument of Remark 4.4 in \cite{DLL2006} also in the case of a non-compact target manifold to conclude that the injectivity radius of $(T^2,g)$ remains bounded from below by a positive constant $c>0$ that only depends on the initial energy and the curve $G_s$.

We also note that
\eqref{claim:metric-dist-bd} holds true on the maximal time interval $[0,T)$ 
as an immediate consequence of Lemma \ref{lem:metric-dist-bd}, the structure of the energy \eqref{eqn:energy-structure-line} and the fact that $f_0 \geq 1$. 

It finally remains to exclude that  $z(t)$ goes to infinity in finite time. 
By \eqref{cherry}, we see that on any finite time interval the metric can only travel a finite distance
with respect to the Weil-Petersson distance. 
Therefore for any finite $T_1\leq T$, we can  use \eqref{claim:metric-dist-bd} to obtain a compact subset $K$ of $\Mab$ so that $g(t)\in K$ and $G_{z(t)}\in K$ for every $t\in [0,T_1)$. 
Hence \eqref{eqn:line-map-ode}, combined with the assumed bounds on $f_0$,  yields that $\abs{\dot z}$ is bounded on $[0,T_1)$ 
 for any finite $T_1 \leq  T$ and thus that the solution cannot blow up in finite time. 
\end{proof}

We now construct a new target which contains the previously constructed $(N_0,g_{N_0})$ as a totally geodesic submanifold. We first consider an auxiliary non-compact target 
defined as 
\begin{equation} \label{def:2D-noncpct-target}
	N_1 = \mathbb{H} \times T^2, \quad g_{N_1} = g_\Hyp + f(x,y)G_{\log y},
\end{equation} 
where $\mathbb{H} = \{ (x,y) \in \R^2 \mid y > 0 \}$ is the upper half plane equipped with the standard hyperbolic metric
 $g_\Hyp = \frac{1}{y^2}(dx^2 + dy^2)$, $G_s$ is the prescribed curve of metrics from Theorem \ref{thm:main} (extended to $(-\infty,0)$ by periodicity) and ${f \colon \mathbb{H} \to [1,\infty)}$ is a smooth function that will be determined below.

We will in particular choose $f$ with the symmetry
$f(ex,ey)=f(x,y)$. As $(x,y) \mapsto (ex,ey)$ is an isometry of the hyperbolic plane and as $G_s$ satisfies $G_s=\varphi^*G_{s+1}$ this ensures that  
\begin{equation} \label{def:quotient-diffeo}
	\Psi\big((x,y),p\big) = \big((ex,ey), \varphi(p)\big)
\end{equation}
is an isometry of $(N_1,g_{N_1})$. 
Denoting by $\Gamma = \langle \Psi \rangle$ the generated group of isometries of $(N_1,g_{N_1})$ 
we
consider the target $(N_1,g_{N_1})/\Gamma$. The final target will be a slight modification of this in order to make it compact.

For appropriate $f$, the function $f_0(z) := f(1,e^z)$ will induce 
an associated manifold $(N_0,g_{N_0})$ as in 
\eqref{def:line-cross-torus-metric}.
We can then see this as a submanifold of $(N_1,g_{N_1})$
by identifying $(z,p) \in \R\times T^2$ with $((1,y),p) \in \Hyp\times T^2$ for $z = \log y$. As $L := \{1\}\times (0,\infty) \subset \Hyp$ is the image of a geodesic in the hyperbolic plane we obtain that the submanifold is totally geodesic provided $\partial_xf(1,y) = 0$ for every $y \in (0,\infty)$.

We therefore choose our coupling function $f$ as follows. Using the embedding and identification described above the function $1+e^{-y/x}$ leads to a function $f_0$ to which Lemma \ref{lem:line-cross-torus-sol} applies. We will modify the function $f$ so that the $x$-derivative vanishes on the line $L$, without changing the corresponding function $f_0$. Choose a function $\rho \in C^\infty(\R)$ with $\rho(n)=n$ for each $n\in\Z$, that is constant in a neighbourhood of each integer and satisfies $\rho(s+1) = \rho(s) + 1$; we think of $\rho$ as an approximation of the function $s\mapsto s$. Now we define $f:\Hyp\to [1,\infty)$ by 
\begin{equation} \label{def:coupling-function}
f(x,y) = 
\begin{cases}
1 + \exp(-ye^{-\rho(\log x)})  &\text{ if } x > 0\\
1 &\text{ if } x \leq 0.
\end{cases}
\end{equation}
The properties of $\rho$ imply that $f$ has the invariance $f(ex,ey) = f(x,y)$. A computation of the derivatives of $f$ shows that it is smooth and satisfies $\partial_x f(1,y) = 0$ for every $y \in (0,\infty)$. 

Given initial data ${y_0 \in (0,\infty)}$
and $g_0 \in \Mab$ we set 
$z_0 = \log y_0$ and consider the corresponding solution of the flow into $(N_0,g_{N_0})$ defined by \eqref{def:line-cross-torus-metric} for $f _0(z) := f(1,e^z)$. 
As $(N_0,g_{N_0})$ is a totally geodesic submanifold of $(N_1,g_{N_1})$ this induces a solution of the flow with target $(N_1,g_{N_1})$. 
This solution can be projected down to a solution of the flow with target $(N_1,g_{N_1})/\Gamma$ and initial data $(u_0,g_0)$ where $u_0 = ((1,y_0),\id)$. We therefore find that all of the conclusions of Lemma \ref{lem:line-cross-torus-sol} hold in this setting, in particular the solution $(u(t),g(t))$ does not degenerate, has $u(t) = ((1,y(t)),\id)$ with $y(t)$ constant in space, and the metric component satisfies the estimate
\begin{equation} \label{eqn:strip-metric-bound}
	\dwp (g(t),G_{\log y(t)}) \leq C (E(t)-1)^{\frac{1}{2}}.
\end{equation}

By restricting to initial data $(u_0,g_0)$ with energy $E(u_0,g_0)\leq f_0(0)$, 
e.g. by considering
\beq \label{eqn:initial-data}
(u_0,g_0) = (((1,y_0),\id),G_{\log y_0}), \quad y_0\geq 1,
\eeq 
we can ensure that $y(t) \geq 1$ along the flow, see \eqref{est:energy-equal-f}, and hence that all solutions of the flow  under consideration remain in a fixed compact subset of $(N_1,g_{N_1})/\Gamma$. Hence we can modify $(N_1,g_{N_1})/\Gamma$ away from this compact subset to obtain a closed target manifold $(N,g_N)$ without changing the solutions of the flow from these initial data.

We now show that these solutions have the winding properties claimed in our first main theorem.

\begin{proof}[Proof of Theorem \ref{thm:main}]
Let $(N,g_N)$ be the closed target as defined above and let $(u,g)$, $u=((1,y),\id)$ be a solution of the flow from initial data $(u_0,g_0)$ as in \eqref{eqn:initial-data}. 
We are free to view this flow as mapping into $(N_1,g_{N_1})/\Gamma$
or $(N_1,g_{N_1})$ as is convenient.
Because of \eqref{eqn:energy-decay} we can choose a sequence $t_j\to \infty$ so that 
\begin{equation}
\label{banana}
\norm{\partial_t(u,g)(t_j)}_{L^2(M,g(t_j))}\to 0\quad\text{ as }j\to \infty.
\end{equation}

We first claim that
$y(t_j)\to\infty$. If this were not true then $y(t_j)$ would have a bounded subsequence as $y(t)\geq 1$, so by passing to a further subsequence we could ensure that 
$y(t_j)\to \tilde y$ for some finite $\tilde y$.
In particular, the map $\tilde u:=((1,\tilde y),\id)$ would be a smooth limit of the maps $u(t_j)$.
Next, by \eqref{eqn:strip-metric-bound} we know that the metrics $g(t_j)$ must remain a bounded distance from $G_{\log(y(t_j))}\to G_{\log(\tilde y)}$,
and are hence contained in a compact subset of $\Mab$. Thus we may pass to a further subsequence and obtain smooth convergence of $g(t_j)$ to some limit metric $\tilde g$.
By \eqref{banana}, we can deduce that $(\tilde u,\tilde g)$ is a stationary point, 
which contradicts Remark \ref{rem:no-crit-point}, completing the proof of the claim.

For each $t\in [0,\infty)$, let $n(t)$ be the unique nonnegative integer so that $\hat y(t)= y(t)e^{-n(t)}\in [1,e)$. By the claim above, 
we have $n(t_j)\to\infty$
and, after passing to a subsequence, we may assume that 
$\hat y(t_j)\to \bar y$ for some $\bar y\in [1,e]$. As the map component of the flow can be represented as 
$$u(t)=((1,y(t)), \id)\sim ((e^{-n(t)}, \hat y(t)), \varphi^{-n(t)})$$
  we thus obtain that the pulled back maps 
$u(t_j)\circ \varphi^{n(t_j)}:T^2\to (N_1,g_{N_1})/\Gamma$ converge smoothly to $\bar u=((0,\bar y),\id)$.

According to \eqref{eqn:strip-metric-bound}  the pulled-back metrics $(\varphi^{n(t_j)})^*g(t_j)\in\Mab$  remain 
a bounded
distance from $(\varphi^{n(t_j)})^*G_{\log(y(t_j))}=(\varphi^{n(t_j)})^*G_{\log(\hat y(t_j))+n(t_j)}= G_{\log(\hat y(t_j))} 
\to G_{\log(\bar y)}$
and are hence contained in a compact subset of $\Mab$. After passing to a further
subsequence, we thus obtain that $(\varphi^{n(t_j)})^*g(t_j)\to \bar g$ for some $\bar g\in \Mab$. 

By \eqref{banana}, the obtained limit $(\bar u,\bar g)$ must be a 
stationary point of the flow. In particular $\bar u$ must be conformal and hence we must have $\bar g=G_{\log \bar y}$.
 As the energy decreases along the flow we conclude that
\beq \label{eqn:ice-cream}
\lim_{t\to \infty}E(t)=E(\bar u,\bar g)=f(0,\bar y)=1.
\eeq
As $E(t)\geq f(1,y(t))\geq 1$ we must therefore have that $f(1,y(t))\to 1$ and so 
$ y(t)\to \infty$ as $t\to \infty$. Combining \eqref{eqn:ice-cream} with \eqref{eqn:strip-metric-bound} also yields the claimed convergence of metrics \eqref{claim:main-thm-metric} for $z(t) = \log y(t)$, which tends to infinity as $t \to \infty$.

It remains to prove the second part of the theorem. Let 
$z \in [0,1)$ be any fixed number. As $z(t)\to\infty$ as $t \to \infty$ we can 
pick times $t_j^z \to \infty$ (for sufficiently large $j$)  such that $z(t_j^z) = z + j$. As above we conclude that $(\varphi^j)^*(u(t_j^z),g(t_j^z))\to (u_z, G_z)$ where $u_z = ((0,e^z),\id)$. We finally observe that for $z_1 \neq z_2$ these maps are minimal immersions with disjoint images and so parametrise different minimal surfaces in $(N,g_N)$. 
\end{proof}

\section{A \L{}ojasiewicz Inequality and Convergence of the Flow} \label{sect:3}

The key step in proving convergence of the flow 
that is uniform in time
will be establishing a suitable \L{}ojasiewicz-Simon inequality, recalling that the flow is the gradient flow of the functional $E$ on the set of equivalence classes of maps and metrics under the action of pulling back by diffeomorphisms homotopic to the identity. Such inequalities were first used in the study of infinite dimensional gradient flows by Simon \cite{Simon1983} and 
our principal task is to develop the flow theory to a point at which we can invoke his ideas.

We will use the structure of the Banach manifold $\M_{-1}^s$ of hyperbolic metrics with coefficients in the Sobolev space of $H^s$ as discussed for example in \cite{Tromba1992}. We write $\M_{c}^s$ for the space of metrics of constant curvature $c\in\{-1,0\}$ with coefficients in the Sobolev space $H^s$.
The main result we need for the proof of Theorem \ref{thm:analytic} can be formulated as follows.

\begin{thm} \label{thm:loj-ineq}
	Let $(N,g_N)$ be a closed real analytic manifold,  let $M$ be a closed oriented surface of genus $\ga\geq 1$, and let $(\ubar,\gbar)$, $\gbar\in\M_c$, be a critical point of the Dirichlet energy $E(u,g)$.
 Then for any $s>3$ there is a neighbourhood $\mathcal O$ of $(\ubar,\gbar)$ in $H^{s}(M,N)\times\M_{c}^s$, $\alpha \in (0,\half)$ and $C < \infty$ such that for any $(u,g) \in \mathcal{O}$ we have
	\begin{equation} \label{eqn:loj-ineq}
	\abs{E(u,g) - E(\ubar,\gbar)}^{1-\alpha} \leq C \left( \norm{\tau_g(u)}_{L^2(M,g)}^2 + \norm{ P_g(\Phi(u,g))}_{L^2(M,g)}^2 \right)^\half,
	\end{equation}
	where $P_g$ is the $L^2(M,g)$-orthogonal projection from the space of quadratic differentials to the space $\Hol(g)$ of holomorphic quadratic differentials. 
\end{thm}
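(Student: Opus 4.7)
The plan is to reduce \eqref{eqn:loj-ineq} to an abstract \loj{}Simon theorem in the spirit of \cite{Simon1983}, suitably adapted to the diffeomorphism-invariant setting of \teich harmonic map flow. Three ingredients are required: analyticity of $E$ in local coordinates, identification of $(\tau_g(u),P_g(\Phi(u,g)))$ with an honest gradient in these coordinates, and Fredholmness of the Hessian at $(\ubar,\gbar)$. As a first step, I would introduce coordinates near $(\ubar,\gbar)$: parametrise $H^s(M,N)$ near $\ubar$ via the exponential map, $u=\exp_{\ubar}(v)$ with $v\in H^s(M,\ubar^*TN)$, and parametrise $\M_c^s$ near $\gbar$ using a Tromba-type analytic slice for the action of the identity component $\diff_0$ of the diffeomorphism group on $\M_c^s$. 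Since the horizontal subspace at $\gbar$ is finite dimensional and canonically dual to $\Hol(\gbar)$, one obtains a local product decomposition $\M_c^s\simeq(\text{slice})\times\diff_0$. Analyticity of $g_N$ then ensures that the energy $\tilde E$ expressed in these coordinates is a real-analytic map from an open set of a Banach space into $\R$, and a direct computation shows that its formal $L^2$-gradient is precisely $(\tau_g(u),P_g(\Phi(u,g)))$, with the projection $P_g$ arising because the metric derivative is taken along the horizontal slice only.

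Next, I would verify that the Hessian $D^2\tilde E$ at $(\ubar,\gbar)$, viewed as an operator between suitable Sobolev spaces, is self-adjoint Fredholm of index zero. The map-map block is the Jacobi operator along the weakly conformal harmonic map $\ubar$, an elliptic self-adjoint second-order operator acting on sections of $\ubar^*TN$; the metric-metric block acts on the finite-dimensional slice and is automatically Fredholm; and the mixed blocks are compact relative to these. With Fredholmness established, I would follow the now-standard Lyapunov--Schmidt reduction: split along the finite-dimensional kernel $K$ of $D^2\tilde E$ and its $L^2$-complement, obtain a linear coercivity estimate on $K^\perp$ from ellipticity, and reduce the problem on $K$ to a real-analytic function on a finite-dimensional ball to which the classical \loj inequality applies with some exponent $\alpha\in(0,\tfrac12)$. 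Assembling the two contributions yields \eqref{eqn:loj-ineq}.

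The main obstacle I anticipate lies in the clean implementation of the $\diff_0$-slice at the Sobolev regularity level. One must choose the slice analytically in $H^s$-metrics (not merely smooth ones), which rests on the analytic dependence of the uniformisation map, or equivalently of the horizontal decomposition, on $H^s$-data as developed in \cite{Tromba1992}. One must also verify that the gradient of the slice-restricted functional $\tilde E$ genuinely equals $(\tau_g(u),P_g(\Phi(u,g)))$ rather than some slice-dependent projection thereof, and confirm that the $\diff_0$-orbit directions through $\gbar$ -- which form a large kernel of the unrestricted Hessian -- are exactly cancelled by passing to the slice. A further routine but delicate point is the mismatch of regularity between the domain $H^s$ and the $L^2$-valued gradient; this is standard in the Simon framework but must be threaded carefully through the finite-dimensional reduction, in particular to ensure that the neighbourhood $\mathcal O$ produced is open in the $H^s$-topology and that the constant $C$ and exponent $\alpha$ can be taken uniform on it.
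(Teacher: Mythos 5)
Your overall strategy (Simon's abstract framework, Fredholmness of the Hessian, a finite-dimensional slice for the metric component, diffeomorphism invariance plus a slice theorem to cover a full $H^s\times\M_c^s$ neighbourhood) matches the paper's outline. But there is a genuine gap in the central step where you claim that the gradient of the slice-restricted energy ``is precisely $(\tau_g(u),P_g(\Phi(u,g)))$, with the projection $P_g$ arising because the metric derivative is taken along the horizontal slice.'' This is not correct for a Tromba-type slice. The tangent space to the slice equals $\Rea(\Hol(\gbar))$ only at the base point $\gbar$; at a nearby $\hat g$ in the slice the tangent space is not $\Rea(\Hol(\hat g))$, so the gradient with respect to the slice parametrization is a pairing of $\Rea(\Phi(u,\hat g))$ against tangent vectors that are not $\hat g$-holomorphic quadratic differentials. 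You therefore cannot read off $P_{\hat g}(\Phi(u,\hat g))$ directly from the slice geometry, and the \loj inequality you would obtain would feature $\norm{\nabla_\mu\tilde E}$ rather than $\norm{P_{\hat g}\Phi}_{L^2}$; these are not obviously comparable.

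The paper avoids this by first proving the inequality on the affine family $\gbar+\Rea(\Hol(\gbar))$ (not even of constant curvature) with the quantity $\norm{\Phi(u,\gmu)}_{L^1(M,\gmu)}$ on the right: the $\mu$-derivatives of $F$ are pairings of $\Rea(\Phi)$ with the \emph{fixed} basis $k_j$ of $\Rea(\Hol(\gbar))$, which one bounds crudely by $\norm{\Phi}_{L^1}$ using $\norm{k_j}_{L^\infty}$. One then passes to the hyperbolic slice by conformal invariance and, crucially, invokes the Poincar\'e inequality for quadratic differentials, $\norm{\Phi-P_g\Phi}_{L^1}\leq C\norm{\dbar\Phi}_{L^1}\leq C\norm{\tau_g(u)}_{L^2}$, to split $\norm{\Phi}_{L^1}\leq C\norm{\tau_g(u)}_{L^2}+C\norm{P_g\Phi}_{L^2}$. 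That inequality is the missing ingredient in your argument; without it there is no route from the slice gradient to $P_g(\Phi)$. If instead you want to carry through your cleaner picture of the gradient literally equalling $P_g(\Phi)$, you would need an analytic slice that is horizontal at \emph{every} point, i.e.\ with tangent space $\Rea(\Hol(\hat g))$ at each $\hat g$, and you would then have to prove analyticity of this horizontal exponential slice in the $H^s$ topology; the paper deliberately sidesteps this by working with the affine slice plus Poincar\'e.

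A smaller remark: you parametrize maps via $u=\exp_{\ubar}(v)$, whereas the paper analytically isometrically embeds $(N,g_N)$ into $(\R^n,h)$ with $N$ totally geodesic, then extends $E$ to $H^s(M,\R^n)$. Both produce a Banach-space domain for Simon's machinery; the embedding has the minor advantages that analyticity of the extended $E$ and $\nabla E$ is immediate, and that tension field and Hopf differential are preserved under the embedding without having to re-derive the Euler--Lagrange structure in exponential coordinates.
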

The simpler, analogous result for $\ga=0$, where the final term of \eqref{eqn:loj-ineq} is always zero because all holomorphic quadratic differentials are trivial, is already included in the work of Simon \cite{Simon1983}.

If $(u,g)$ is a solution of the flow \eqref{eqn:flow} then 
the right-hand side of \eqref{eqn:loj-ineq} is controlled by the $L^2$-norm of the velocity of $(u,g)$, modulo a factor that is allowed to depend on $\eta$. 
Arguing as in the proof of \cite[Lemma 1]{Simon1983}, we can thus 
use Theorem \ref{thm:loj-ineq} to
control the $L^2$-length of any solution $(u,g)$ of the flow on intervals $I=(s_1,s_2)$ for which $(u,g)\vert_I$ is contained in $\mathcal O$ and for which $E(u,g)\geq E(\ubar,\gbar)$. To be more precise, setting $\Delta E(t) = E(u(t),g(t))-E(\ubar,\gbar)\geq 0$ 
	 and combining the \loj inequality with \eqref{eqn:energy-decay} and \eqref{eqn:flow}, 
gives
	\begin{equation*}
	\frac{d}{dt}(\Delta E(t))^\alpha = -\alpha \left(  \Delta E(t) \right)^{\alpha -1}\left(\norm{\partial_t u}_{L^2(M,g)}^2 + \eta^{-2}\norm{\partial_t g}_{L^2(M,g)}^2\right) \\
	\leq -C^{-1}
	\norm{\partial_t(u,g)}_{L^2(M,g)}
	\end{equation*}
for $C$ independent of $t$, so 
	\begin{equation} \label{eqn:length-energy-bd}
	\int_{s_1}^{s_2} \norm{\partial_t(u, g)}_{L^2(M, g)} dt \leq C
	(\Delta E(s_1))^\alpha.
	\end{equation}
We will give the proof of Theorem \ref{thm:loj-ineq} later and first explain how this implies our second main result, Theorem \ref{thm:analytic}, about the convergence of the flow. 
When $M = T^2$ the problem is simplified considerably by the fact that the metric part of the flow is constrained to lie in $\Mab$. We shall hence focus for now on the case of surfaces of genus $\gamma\geq 2$ and will outline the argument for the simpler case of domains of genus $1$ in Remark \ref{rem:conv-sphere-torus}. 

\begin{proof}[Proof of Theorem \ref{thm:analytic}]
	Let $(u,g)$ be as in the theorem. As there is no degeneration of the metric $g$ along the sequence of times $t_j \to \infty$ 
	we can pass to a subsequence (not relabelled) so that there are orientation preserving diffeomorphisms $\psi_j \colon M\to M$ for which 
	\begin{equation} \label{eqn:conv-u-g}
		\psi_j^*g(t_j)\to \gbar \text{ smoothly and } \psi_j^{*} u(t_j):= u(t_j)\circ \psi_j\weakto \ubar  \text{ weakly in }H^1,
	\end{equation}
where $(\ubar,\gbar)\in C^\infty(M,N)\times \M_{-1}$ is a critical point of $E$, see \cite{RT2015}. 
Although here we have only claimed weak $H^1$ and hence strong $L^2$ convergence, in the present situation the first hypothesis of \eqref{ass:map-metric-converg},
combined with the convergence of the metrics,
tells us that we have strong $H^1$ convergence.
This is all we need for now, but later in the proof, parabolic regularity theory will yield uniform $C^k$ bounds on  $\psi_j^{*} u(t_j)$, see \eqref{eqn:Ck-control-map} below, allowing us to deduce that also the convergence of the map component in \eqref{eqn:conv-u-g} is smooth. 
Note that because different smooth domain metrics all lead to the same notion of $H^s$ (or $C^k$) convergence, here and in the following there is no need to specify the domain metric when talking about convergence.

By the convergence of the metric in \eqref{eqn:conv-u-g}, we may drop a finite number of terms in $j$ in order to assume that 
\beq\label{factor2equiv}
\half\gbar\leq \psi_j^*g(t_j)\leq 2\gbar
\eeq
for each $j$. It will also be convenient to drop finitely many terms so that $t_j\geq 1$.

	Let   $s\in\N_{> 3}$ and let $\mathcal{O}$ be the neighbourhood of $(\ubar,\gbar)$ on which the \loj inequality \eqref{eqn:loj-ineq} holds. Let $V = \{h \in \M_{-1}^s \colon \norm{h - \gbar}_{H^s(M,\bar g)} < \sigma \}$ and $U = \{ w \in H^{s} \colon \norm{w - \ubar}_{H^{s}(M,\bar g)} < \sigma\}$ and assume that $\sigma>0$ is chosen small enough so that $U\times V \subset \mathcal{O}$ and so that the projection of $V$ onto 
moduli space lies in a compact subset.

For each $j$, if $\psi_j^*(u,g)(t_j)\in U\times V \subset \mathcal O$ we let $T_j >  t_j$ be the maximal time so that $\psi_j^*(u,g)(t)\in U\times V \subset \mathcal O$ for all $t \in [t_j,T_j)$; otherwise we set $T_j = t_j$. 
As $(\tilde u_j,\tilde g_j) := \psi_j^*(u,g)$ is also a solution of the flow \eqref{eqn:flow} we can apply \eqref{eqn:length-energy-bd} to conclude that 
for $C$ independent of $j$,
	\begin{equation} \label{eqn:small-length-j}
	\int_{t_j}^{T_j} \norm{\partial_t(u, g)}_{L^2(M, g)} dt = \int_{t_j}^{T_j} \norm{\partial_t(\tilde u_j, \tilde g_j)}_{L^2(M, \tilde g_j)} dt \leq C 
	(\Delta E(t_j))^\alpha\to 0
	\end{equation}	
	as $j\to \infty$ since $\Delta E(t_j)=E((u,g)(t_j))-E(\ubar,\gbar)\searrow 0$
	as a result of the strong $H^1$ convergence of $\tilde u_j$ and smooth convergence of $\tilde g_j$.

	The main part of the proof will be to show that for sufficiently large $j$ we have $T_j = \infty$. Once we have established this, we can then use \eqref{eqn:small-length-j} to deduce that $(u,g)$ converges to a critical point of $E$ as $t\to \infty$. While \eqref{eqn:small-length-j} on its own would only indicate $L^2$-convergence we will later be able to combine it  with parabolic regularity theory for the map and the properties of horizontal curves of hyperbolic metrics \cite{RT2018}
to conclude that the flow indeed converges smoothly as claimed.

We will have to ensure we have control of the metric and map on suitable intervals. As the metric component is given by a horizontal curve, the results of \cite{RT2018} yield appropriate $C^k$ control on the metric as long as we have a positive lower bound on the injectivity radius. Such a bound on $\inj(M,g)$ holds trivially for metrics in the $H^s$ neighbourhood $V$ of $\gbar$ as the projection of $V$ onto moduli space is assumed to be compact. Furthermore, as $\inf_j \inj(M,g(t_j))>0$, and by the well-known fact that
the evolution of the injectivity radius is controlled by the $L^2$-velocity of $g$, 
which is constrained by \eqref{cherry},
we can fix $\tau_0 \in (0,1]$ 
in a way that ensures that the injectivity radii of the surfaces $(M,g(t))$ are bounded away from zero  for $t \in [t_j - \tau_0,t_j+\tau_0]$, uniformly in $j$. 
Combined we thus find that there is some $\de_0>0$ so that 
\beq \label{est:inj-de-0}
\inj(M,g(t))\geq \de_0 \text{ for every } t\in I_j^{\tau_0}, \text{ for every } j\in \N,\eeq
where we  define 
\beq \label{def:Ij-tau} 
I_j^{\tau} := [t_j - \tau, \max(t_j+\tau,T_j) ) \text{ for } \tau>0.
\eeq
In addition we also have a uniform upper bound on the $L^2$-length of $g\vert_{I_j^{\tau_0}}$ of the form 
$C E(0)^\al + C \sqrt{E(0)\tau_0}\leq C$,  
for $C$ independent of $j$,
compare \eqref{eqn:small-length-j} and \eqref{cherry}. 
This together with \eqref{est:inj-de-0} allows us to split each $I_j^{\tau_0}$ into a fixed $j$-independent number of subintervals on which Lemma 3.2 of \cite{RT2018} is applicable. This yields that 
the metrics $\tilde g_j(t)$ are uniformly equivalent on $I_j^{\tau_0}$ 
i.e. comparable by a factor that is independent of $j$.
By \eqref{factor2equiv}, we find that there exists $C>0$ 
independent of $j$ so that 
\beq\label{metric_equiv}
C^{-1}\gbar\leq \tilde g_j(t)\leq C\gbar\qquad\text{ for every }t\in I_j^{\tau_0}.
\eeq
Lemma 3.2 of \cite{RT2018} also yields that
for every $k\in\N$, there exists $C<\infty$ independent of $j$ such that  
for every $t\in I_j^{\tau_0}$ we have
\begin{equation} \label{eqn:Ck-control-metric}
	\norm{\partial_t \tilde g_j(t)}_{C^k(M,\gbar)} \leq 
	C\norm{\partial_t \tilde g_j(t)}_{C^k(M,\tilde g_j(t_j))}
	\leq C	\norm{\partial_t \tilde g_j(t)}_{L^2(M,\tilde g_j(t))} 
\end{equation}
where the first inequality holds true as $\tilde g_j(t_j)$ converges smoothly to $\gbar$.

Having established this control on the metric component on the intervals $I_j^{\tau_0}$ we now turn to the analysis of  $\tilde u_j(t) = u(t)\circ \psi_j$ on these intervals. We recall that standard $H^2$-estimates, for example those found in section 3 of \cite{RT2019}, imply that there exist constants $\eps_0=\eps_0(N,g_N)>0$ and $C=C(N,g_N)$ so that for any hyperbolic metric $g$, any $H^2$ map $v:M \to N$ and any $r\in (0,\inj(M,g)]$ with 
\begin{equation} \label{eqn:no-concentration}
	\sup_{x \in M} 
	\half\int_{B^{g}_{r}(x)} \abs{dv}_{g}^2 dv_{g}
 \leq 2\eps_0
\end{equation}	
we have 
\begin{equation} \label{eqn:eps-reg}
	\int_{B^g_{r/2}} |\nabla_g^2 v|_g^2 + |d v|_g^4 dv_g \leq C\left(r^{-2}E(v;B^g_r) + \norm{\tau_g(v)}_{L^2(M,g)}^2\right).
\end{equation}


We will show that we can choose $r\in(0,\de_0]$, $\tau\in (0, \tau_0]$ and $j_0$ so that
\eqref{eqn:no-concentration} holds true 
for $g=g(t)$ and $v=u(t)$
on $I^{\tau}_j$ for every $j\geq j_0$. 
Letting $r_1>0$ be as in 
Remark \ref{rem:weaker-energy-ass}, this will follow from that remark, 
for appropriate 
$r\in (0,r_1]$ if we can control the  evolution of the local energy
not only forward in time but also backwards in time.

To do this, we first note that without loss of generality, by reducing $r_1>0$ if necessary, we may assume that $r_1\leq \de_0$.
Fix a smooth cut-off function $\si:\R\to [0,1]$ with $\si(\rho)=1$ for $\rho\leq r_1/2$ and $\si(\rho)=0$ for $\rho\geq 3r_1/4$.
For each $x\in M$, define $\vph\in C_c^\infty(B_{r_1}^{g(t_j)}(x))$ 
by $\vph(y):=\si(d_{g(t_j)}(y,x))$.
Appealling to the lower injectivity radius bound for $g(t)$ on $I_j^{\tau_0}$, and the holomorphicity of $\partial_t g$
we deduce that 
$\norm{\partial_tg}_{L^\infty(M,g)}\leq C\norm{\partial_t g}_{L^1(M,g)}\leq CE(0)$ (cf. the proof of \cite[Lemma 2.6]{RT2018}).
Consequently, 
Lemma 3.2 from \cite{RT2019}, combined also with \eqref{metric_equiv},
allows us to control the evolution of the local energy by
\beqa
\bigg|\frac{d}{dt}\left(\half\int \vph^2\abs{du}_{g}^2 dv_{g}\right)
+\int \vph^2|\tau_g(u)|^2dv_g \bigg|
&\leq
C\left(\int \vph^2|\tau_g(u)|^2dv_g\right)^\half +C\\
&\leq C+\int \vph^2|\tau_g(u)|^2dv_g
\eeqa
for each $t\in I_j^{\tau_0}$, where $C$ is independent of 
$j$, $t$ and $x$.
Therefore, for $t\in I_j^{\tau_0}$ with $t<t_j$ we obtain that  
\beqa
\label{est:cake24} 
\half\int_{B^{g(t_j)}_{r_1/2}(x)} \abs{du(t)}_{g(t)}^2 dv_{g(t)} 
& \leq \half \int_{B^{g(t_j)}_{r_1}(x)} \abs{du(t_j)}_{g(t_j)}^2 dv_{g(t_j)} \\
& \quad +C
(t_j-t)+2\int_{t}^{t_j} \norm{\tau_g(u)}_{L^2(M,g)}^2 dt',\eeqa
while for $t\in I_j^{\tau_0}$ with $t>t_j$ we have
\beqs  
\half\int_{B^{g(t_j)}_{r_1/2}(x)} \abs{du(t)}_{g(t)}^2 dv_{g(t)} 
\leq \half \int_{B^{g(t_j)}_{r_1}(x)} \abs{du(t_j)}_{g(t_j)}^2 dv_{g(t_j)} 
+C
(t-t_j),
\eeqs
see \cite{Struwe1985}, \cite{Topping04}, 
for the analogous results for harmonic map flow. 
As \eqref{eqn:energy-decay} implies that $t\mapsto \norm{\tau_g(u)}_{L^2}^2$ is integrable in time, 
 the tension term in \eqref{est:cake24}
is less than $\frac{\eps_0}{2}$ 
for all sufficiently large $j$ and any $t\in I_j^{\tau_0}$.
By \eqref{metric_equiv}, the metrics $g(t)$ for $t\in I_j^{\tau_0}$ 
are equivalent, by a factor independent of $j$,
so setting 
$r:=C^{-1}r_1\in (0,\de_0)$ 
for a suitably large  $C> 1$, 
independent of $j$, 
ensures that 
$B^{g(t)}_r(x)\subset B^{g(t_j)}_{r_1/2}(x)$ for every $j\in\N$, $t\in I_j^{\tau_0}$ and $x\in M$.

We can thus choose $\tau\in (0,\tau_0]$
and $j_0$ so that 
\eqref{eqn:no-concentration} holds for $v=u(t)$ and $g=g(t)$ on the intervals $[t_j - \tau,t_j + \tau]$, $j\geq j_0$. Of course, \eqref{eqn:no-concentration} also holds trivially for maps and metrics in the $H^{s}$ neighbourhood $\mathcal O$ of $(\ubar,\gbar)$ if $\si>0$ is initially chosen small enough, after reducing $r$ if necessary. 
Therefore we obtain that  \eqref{eqn:Ck-control-metric} and, 
for $(v,g)=(u(t),g(t))$,
\eqref{eqn:no-concentration} and hence \eqref{eqn:eps-reg} hold on the interval $I_j^{\tau}$ 
for $j\geq j_0$.
Standard parabolic theory, again see for example \cite{RT2019}, now yields that for every $k\in \N$ there exists a constant $C$ independent of $j$ 
so that 
\begin{equation} \label{eqn:Ck-control-map}
 \norm{\tilde u_j(t)}_{C^k(M,\gbar)} 
 \leq C \quad\text{  for all  } t\in [t_j,\max(t_j+\tau,T_j)).
\end{equation}
As a first application this implies that $\tilde u_j(t_j)$  converges to  $\ubar$ not only in $H^1$ but in $C^k$ for every $k$ as claimed earlier. 

Let now $\eps>0$ be a constant to be determined below, 
independently of $j$. Then \eqref{eqn:small-length-j} and the smooth convergence of maps and metrics ensures that 
	\begin{equation} \label{eqn:small-u-g-tj}
	\norm{\tilde u_j(t_j) - \ubar}_{H^{s}(M,\gbar)} + \norm{\tilde g_j(t_j) - \gbar}_{C^{s}(M,\gbar)} +  \int_{t_j}^{T_j} \norm{\partial_t(\tilde u_j, \tilde g_j)}_{L^2(M, \tilde g_j)} dt \leq \eps,
	\end{equation}
for all sufficiently large $j$ depending, in particular, on $\ep$. We claim that 
if $\eps>0$ is chosen small enough then for every $j\geq j_0$ for which \eqref{eqn:small-u-g-tj} holds we have that both
$\norm{\tilde g_j(t)- \gbar}_{H^s(M,\gbar)} < \sigma/2$ and $\norm{\tilde u_j(t)-\ubar}_{H^{s}(M,\gbar)} < \sigma/2$ for every $t \in [t_j,T_j)$ and hence $T_j = \infty$. First we deal with the metric component; we have for every $t \in [t_j,T_j)$
\begin{equation} \label{eqn:Hs-norm-small-metric}
\begin{split}
	\norm{\tilde g_j(t) - \gbar}_{C^{s}(M,\gbar)} &\leq \norm{\tilde g_j(t_j) - \gbar}_{C^{s}(M,\gbar)} + \int_{t_j}^{T_j} \norm{\partial_t \tilde g_j }_{C^{s}(M,\gbar)}dt  \\
	&\leq \norm{\tilde g_j(t_j) - \gbar}_{C^{s}(M,\gbar)} + C \int_{t_j}^{T_j} \norm{\partial_t \tilde g_j }_{L^2(M,\tilde g_j(t))}dt \leq \eps + C\eps
\end{split}
\end{equation}
for $C$ independent of $j$ and $\ep$,
where we have used  \eqref{eqn:Ck-control-metric} and \eqref{eqn:small-u-g-tj}. This establishes the necessary claim on the metric component if $\eps>0$ is chosen small enough.

To deal with the map component we first use \eqref{eqn:small-u-g-tj} 
and \eqref{metric_equiv} to conclude that for every $t \in [t_j,T_j)$
\beqa
\label{eqn:L2-norm-small}
\norm{\tilde u_j(t) - \ubar}_{L^2(M,\bar g)} &\leq 
\norm{\tilde u_j(t_j) - \ubar}_{L^2(M,\bar g)}	
+ \int_{t_j}^{t} \norm{\partial_t \tilde u_j }_{L^2(M,\gbar)}dt\\
&\leq 
\norm{\tilde u_j(t_j) - \ubar}_{H^{s}(M,\gbar)}	
+ C \int_{t_j}^{t} \norm{\partial_t \tilde u_j }_{L^2(M,\tilde g_j(t))}dt\\
&\leq C\eps,
\eeqa
for $C$ independent of $j$ and $\ep$.
At the same time \eqref{eqn:Ck-control-map}  implies the uniform bound
$\norm{\tilde u_j(t)-\ubar}_{C^{s+1}(M,\gbar)}\leq C_1$
on $[t_j,T_j)$, where $C_1$ is independent of $j$ and $\ep$.
Because $C^{s+1}$ embeds compactly into $H^{s}$, which in turn embeds continuously into $L^2$, by Ehrling's lemma we know that for every $\de>0$ there exists a number $C$
so that for every $w\in C^{s+1}(M,N)$ we have
$$\norm{w}_{H^{s}(M,\gbar)}\leq \de \norm{w}_{C^{s+1}(M,\gbar)} +C \norm{w}_{L^2(M,\gbar)}.$$
Applied for $\de=\frac{\si}{4C_1}$ this allows us to conclude that on $[t_j,T_j)$
$$\norm{\tilde u_j(t) - \ubar}_{H^{s}(M,\gbar)}\leq \frac{\si}{4}+C\eps\leq \frac{\si}{2}$$
where the last inequality holds provided $\eps>0$ is initially chosen small enough. 
This concludes the proof of our claim that $T_j=\infty$ for every $j\geq j_0$ such that \eqref{eqn:small-u-g-tj} holds for an $\ep>0$ that can now be considered fixed.

Let us  fix  $J \in \N$, $J\geq j_0$, large enough so that \eqref{eqn:small-u-g-tj} holds with this $\eps$ for $j=J$ 
and hence so that 
$T_J=\infty$. 
Thus \eqref{eqn:length-energy-bd} can be applied on all of $[t_{J},\infty)$ allowing us to conclude that
\begin{equation*}
 	\int_{t}^\infty \norm{\partial_t(u, g)}_{L^2(M, g)} dt \leq C (\Delta E(t))^\alpha\to 0 \text{ as } t\to \infty.
\end{equation*} 
Using this we will be able to show that the original flow $(u, g)(t)$ converges in $L^2$ 
as $t\to\infty$ without having to restrict to a sequence of times $t_i\to\infty$
and without having to pull back by diffeomorphisms.
Indeed, because 
\beq
\label{semolina}
\int_{t}^\infty \norm{\partial_t(\tilde u_J, \tilde g_J)}_{L^2(M, \tilde g_J)} dt
=\int_{t}^\infty \norm{\partial_t(u, g)}_{L^2(M, g)} dt
\leq C (\Delta E(t))^\alpha\to 0
\eeq
we can compute
\beqa
\int_{t}^\infty \norm{\partial_t(u, g)}_{L^2(M, (\psi_J^{-1})^*\gbar)} dt 
&=\int_{t}^\infty \norm{\partial_t(\tilde u_J, \tilde g_J)}_{L^2(M, \gbar)} dt \\
& \stackrel{\eqref{metric_equiv}}{\leq}
C\int_{t}^\infty \norm{\partial_t(\tilde u_J, \tilde g_J)}_{L^2(M, \tilde g_J)} dt \\
&\to 0,
\eeqa
which implies that $(u,g)(t)$ converges in $L^2$ to some limit $(u_\infty,g_\infty)$ as $t\to \infty$.

Moreover, as the curve of metrics is horizontal and as we have a uniform lower bound on the injectivity radius on all of $[t_J,\infty)$ we have
\begin{equation}
\int_{t}^\infty \norm{\partial_tg}_{C^k(M,(\psi_J^{-1})^*\bar g)} dt 
=
\int_{t}^\infty \norm{\partial_t \tilde g_J(t)}_{C^k(M,\bar g)} dt 
\stackrel{\eqref{eqn:Ck-control-metric}}{\leq}
C\int_{t}^\infty 
\norm{\partial_t \tilde g_J(t)}_{L^2(M,\tilde g_J(t))} dt\to 0
\end{equation}
by \eqref{semolina}, which can be integrated to find that the 
metrics converge to $g_\infty$ \emph{smoothly} as $t\to\infty$.
In addition, \eqref{eqn:Ck-control-map} yields uniform bounds on $u(t)$ in $C^k(M,(\psi_J^{-1})^*\gbar)$, $t\in [t_J,\infty)$, 
so also the map $u$ converges smoothly to its limit $u_\infty$.

Now that we have the smooth convergence, we see readily that the limit $(u_\infty,g_\infty)$
is a critical point of $E$.
\end{proof}

\begin{rem}\label{rem:conv-sphere-torus}
For surfaces of genus $1$ this proof still applies but can be simplified. 
As observed previously we may assume that $g(0)\in \Mab$ and hence that the flow of metrics is constrained to $\Mab$. 
Also we may choose the diffeomorphisms $\psi_j$ in the above proof so that $\psi_j^*\Mab=\Mab$  and hence $\tilde g_j(t)\in \Mab$ 
for any $t$ and
 $\bar g\in \Mab$. 
 Combining the convergence of the 
  $\tilde g_j(t_j)$ with the 
uniform upper bound on the $L^2$-length of $\tilde g_j\vert_{I_j^{\tau_0}}$, say for $\tau_0:=1$, and the completeness of $\Mab$ with respect to  the Weil-Petersson distance yields that 
 the metrics  $\tilde g_j(t)$, $t\in I_j^{\tau_0}$, are contained in a ($j$ independent) compact subset of $\Mab$. 
Hence \eqref{est:inj-de-0} and  \eqref{metric_equiv} still hold and the $C^k$ estimate \eqref{eqn:Ck-control-metric} trivially follows from the explicit form of the metrics $g_{a,b}$. 
For the rest of the proof we can then argue exactly as above. 
\end{rem}

We now turn to the proof of Theorem \ref{thm:loj-ineq} where we again focus on the case of surfaces of genus $\gamma\geq 2$, while the case of tori will be discussed later in Remark \ref{rem:Loj-sphere-torus}. 

For surfaces of genus $\gamma\geq 2$ 
we will first consider the special case of (not necessarily hyperbolic) metrics that are contained in a finite-dimensional affine space, then conclude that the desired inequality holds for metrics in a 
finite-dimensional slice of hyperbolic metrics as considered e.g. in \cite{Tromba1992} and finally use the structure of the space of hyperbolic metrics, in particular the so-called \emph{Slice Theorem}, to lift the inequality to a neighbourhood of $\bar g$ in $\M^s_{-1}$. 

\begin{prop} \label{prop:loj-affine}
	Let $(N,g_N)$ be a closed real analytic manifold and let $M$ be a closed oriented surface of genus $\gamma \geq 2$. Fix $s > 3$ and a smooth critical point $(\ubar,\gbar)$ 
	of the Dirichlet energy $E(u,g)$. Then there exist 
numbers $\alpha \in (0,\half )$ and $C < \infty$, a neighbourhood $\hat U$ of $\ubar$ in $H^{s}(M,N)$ and a neighbourhood $\hat V$ of $0$ in $\Rea(\Hol(M,\bar g))$,
with the property that each element of $\gbar+\hat V$ is a metric, 
such that for any $u \in \hat U$ and any  $g \in \gbar + \hat V$  we have
	\begin{equation} \label{eqn:loj-ineq-affine}
	\abs{E(u,g) - E(\ubar,\gbar)}^{1-\alpha} \leq C \left( \norm{\tau_g(u)}_{L^2(M,g)}^2 + \norm{ \Phi(u,g)}_{L^1(M,g)}^2 \right)^\half.
\end{equation}
\end{prop}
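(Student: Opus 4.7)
The plan is to recognise the restriction of $E$ to $H^s(M,N) \times (\gbar + \hat V)$ as a real analytic functional on a Banach manifold whose Hessian at the critical point $(\ubar,\gbar)$ is Fredholm of index zero, and then appeal to the abstract \loj-Simon inequality in the spirit of \cite{Simon1983}. The finite-dimensional space $V := \Rea(\Hol(M,\gbar))$ enters essentially as an additional finite-dimensional parameter, so the bulk of the work is on the $u$-component and parallels Simon's original setting for harmonic maps into analytic targets.

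First I would parametrise maps $u$ near $\ubar$ using the exponential map of the analytic target $(N,g_N)$, writing $u = \exp_{\ubar}(w)$ for $w$ in a neighbourhood of $0$ in the Hilbert space $H := H^s(\ubar^*TN)$, and parametrise metrics by $g = \gbar + h$ for $h \in V$ small enough that $\gbar + h$ remains positive definite. Since $N$ is real analytic the Nemytskii operator $w \mapsto \exp_{\ubar}(w)$ is real analytic on $H^s$ for $s > 1$, and since the energy integrand depends analytically on $g^{ij}$, $\sqrt{\det g}$ and the first derivatives of the map, the pulled-back functional $\tilde E(w,h) := E(\exp_{\ubar}(w), \gbar + h)$ is real analytic on a neighbourhood of the origin in $H \oplus V$. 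A direct computation identifies its $L^2$-gradient in the $w$-direction, up to a bundle isomorphism coming from the differential of the exponential map, with $-\tau_g(u)$, so that $\norm{\na_w \tilde E}_{L^2} = \norm{\tau_g(u)}_{L^2}$; its differential in a direction $k \in V$ is a constant multiple of $\int_M \langle k, \Phi(u,g)\rangle_g \dvg$.

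The decisive step is to verify that the Hessian $L := D^2\tilde E(0,0)$ is Fredholm of index zero on $H \oplus V$. Its $H \oplus H$ block is the Jacobi operator of $\ubar$, a linear self-adjoint elliptic operator of order two, hence Fredholm of index zero from $H^s$ into $H^{s-2}$; its $V \oplus V$ block is finite-dimensional; and the cross terms have finite rank because one input lies in the finite-dimensional space $V$. With analyticity and Fredholm Hessian in hand one can run the standard \loj-Simon argument, carrying out a Lyapunov--Schmidt reduction to the finite-dimensional kernel of $L$ to obtain a real analytic function on an open set of some $\R^N$, to which the classical \loj inequality applies. Translating back yields $\alpha \in (0, \half)$ and $C < \infty$ with
\beq
\abs{\tilde E(w,h) - \tilde E(0,0)}^{1-\alpha} \leq C \bigl( \norm{\na_w \tilde E}_{L^2}^2 + \norm{\na_h \tilde E}_{V^*}^2 \bigr)^{1/2}.
\eeq
Since $V$ is a finite-dimensional space of smooth tensors, on it the $L^\infty$-norm is comparable to any other norm, so the pairing $k \mapsto \int_M \langle k, \Phi(u,g)\rangle_g \dvg$ satisfies $\norm{\na_h \tilde E}_{V^*} \leq C \norm{\Phi(u,g)}_{L^1(M,g)}$, and \eqref{eqn:loj-ineq-affine} follows.

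The main technical point will be setting up the functional-analytic framework cleanly enough that the Fredholm property of the coupled Hessian is transparent and the norms in the abstract inequality match those appearing in \eqref{eqn:loj-ineq-affine}. In particular one must track the coupling between the $u$ and $h$ variables in the second variation, confirm that the cross terms really are of finite rank once $k \in V$ is constrained to the finite-dimensional slice, and dualise the $V$-component of the gradient using the uniform smoothness of elements of $V$ so as to recover $\norm{\Phi}_{L^1}$ rather than a stronger norm such as $\norm{\Phi}_{L^2}$.
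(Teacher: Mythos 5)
Your proposal is correct and reaches the same \loj-Simon inequality, but via a genuinely different parametrisation of the space of maps than the paper uses. Where you linearise intrinsically, writing $u = \exp_{\ubar}(w)$ for $w$ a small section of $\ubar^*TN$ and relying on the analyticity of the Nemytskii operator on $H^s$, the paper instead Nash-embeds $(N,g_N)$ analytically into $(\R^n,g_{\R^n})$, modifies the ambient metric on a tubular neighbourhood to an analytic $h$ making $N$ totally geodesic with $h\vert_N = g_{\R^n}$, and then works with the \emph{flat} Hilbert space $H^s(M,\R^n)$. The advantage of the paper's route is concreteness: the gradient of $F$ in the map direction has the explicit expression \eqref{eq:grad-F-u} with no bundle isomorphism to track, the totally geodesic property guarantees the ambient and intrinsic tension fields coincide on the nose, and the normalisation $h\circ\ubar=g_{\R^n}$ makes the Hessian at $(\ubar,0)$ visibly equal to $-\Delta_{\gbar}$ plus compact lower-order terms, so Fredholmness of index $0$ is immediate. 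Your route has the advantage of staying intrinsic and avoiding the ambient-metric construction entirely, at the cost of having to carry along the bundle isomorphism $d\exp_{\ubar}(w)$ in the gradient identification (which is fine since it is uniformly close to the identity for small $w$, turning your claimed equality of norms into the comparability one actually needs) and having to verify analyticity of the Nemytskii operator on Sobolev sections, which is classical but not trivial. Both routes handle the finite-dimensional metric direction identically, and both reach the $L^1$-bound on the $V$-component of the gradient by exploiting that $\Rea(\Hol(\gbar))$ is a fixed finite-dimensional space of smooth tensors on which all norms are equivalent. One detail worth being explicit about, which the paper is careful with and your sketch elides: Simon's argument requires not merely that the Hessian at the critical point be Fredholm on $H^s\to H^{s-2}$, but that the linearisation of $\na F$ at nearby points extend to bounded operators $H^2\to L^2$ varying continuously in the base point, and that this extension of the Hessian also be Fredholm of index $0$; this is straightforward in either parametrisation but should be stated.
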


\begin{proof}
Given a closed analytic manifold $(N,g_N)$ we can first use Nash's embedding theorem to isometrically embed $(N,g_N)$ into a suitable Euclidean space $(\R^n,g_{\R^n})$ in an analytic manner. We can then modify the metric on $\R^n$ in a tubular neighbourhood of $N$, as described in \cite[Lemma 4.1.2]{Helein2002}, to obtain a new metric $h$ on $\R^n$, which is analytic in a neighbourhood of $N$, such that $N$ is a totally geodesic submanifold of $(\R^n,h)$ and so that at each point $p \in N$ we have $h(p) = g_{\R^n}(p)$. 
If we view any  map $u:(M,g)\to (N,g_N)$ as a map 
$u:(M,g)\to (\R^n,h)$ using this embedding, then the Hopf-differential remains unchanged and, as $N$ is totally geodesic, the tension fields can be identified.

The claim of the proposition hence follows immediately if we prove the inequality \eqref{eqn:loj-ineq-affine} for maps in an $H^s$ neighbourhood $\hat U$ of $\bar u$ in the larger space of maps from $(M,g)$ to $(\R^n,h)$.
From now on we hence consider the Dirichlet energy of maps $u:M\to (\R^n,h)$. 

We also identify the $6(\gamma -1)$ dimensional affine space $\gbar+\Rea(\Hol(\gbar))$ with $\R^{6(\gamma-1)}$ by fixing an 
$L^2(M,\gbar)$-orthonormal basis 
$\{k_i\}_{i = 1}^{6(\gamma - 1)}$  of $\Rea (\mathcal{H}(\gbar))$ and 
setting 
$\gmu = \gbar + \sum_{i=1}^{6(\gamma -1)}\mu_ik_i$. In the following we restrict $\mu$ to a neighbourhood $V$ of $0 \in \R^{6(\gamma -1)}$ so that $\half\bar g \leq \gmu \leq 2\bar g$
for all $\mu \in V$.

We now define
$ F\colon H^{s}(M,\R^n)\times V \to  \R $ by 
$$ F(u,\mu):= E(u:(M,\gmu)\to (\R^n,h)).
$$
We want to consider the gradient of $F$ with respect to the fixed inner product 
\beq \label{def:inner-product}
\langle (v,\mu), (\tilde v,\tilde \mu)\rangle = \lan v, \tilde v\ran_{L^2((M,\gbar),(\R^n,g_{\R^n}))} + \lan \mu,\tilde \mu \ran_{\R^{6(\gamma -1)}},
\eeq
i.e. 
$\nabla F(u,\mu) =: (\graduF(u,\mu), \gradmuF(u,\mu))$ satisfies
\beq \label{eqn:gradF}
	\frac{d}{d\eps}\bigg|_{\eps =0}F(u+\eps v,\mu+\eps\xi) = \langle \nabla F(u,\mu), (v,\xi)\rangle
	\eeq
for every $(v,\xi) \in H^{s}(M,\R^n)\times \R^{6(\gamma -1)}$.

As $\ddeps F(u+\eps v, \mu) = -\int_M \langle \tau_{\hat g(\mu)}(u), v\rangle_{h \circ u} \, dv_{\hat g(\mu)}$ we have that 
\begin{equation}\label{eq:grad-F-u}
	\nabla_u F(u,\mu) = -\sum_{i,j=1}^n(h\circ u)_{ij}(\tau_{\hat g(\mu)}(u))_j \psi(\mu)e_i
\end{equation}
where $(\tau_{\hat g(\mu)}(u))_i$ are the components of $\tau_{\hat g(\mu)}(u)$ with respect to the standard basis $\{e_i\}$ of $\R^n$ and $\psi(\mu) \colon M \to (0,\infty)$ is characterised by the relation $dv_{\hat g(\mu)} = \psi(\mu)dv_{\bar g}$ between the volume forms.

As the $L^2(M,\gmu)$-gradient at $g=\gmu$
of $g\mapsto E(u,g)$ 
on the space of all metrics is given by 
$-\tfrac14 \Rea(\Phi(u,\gmu))=-\frac14\big[ u^*h-\half\tr_{\gmu}(u^*h)\gmu\big]$ 
(see, e.g. \cite[Theorem 3.1.3]{Tromba1992})
we can furthermore see that $\na_\mu F=(\partial_{\mu_j} F)_{j=1,\ldots, 6(\gamma-1)}$ is given by
\begin{equation}\label{eq:grad-F-mu}
	\partial_{\mu_j} F(u, \mu) =-\tfrac14 \langle \Rea(\Phi(u,\gmu)), k_j\rangle_{L^2(M,\gmu)}.
\end{equation}

Hence
$$\nabla F \colon H^{s}(M,\R^n) \times V \to H^{s-2}(M,\R^n)\times \R^{6(\gamma-1)}$$
is well defined. We now collect some properties of $F$ and its gradient which will allow us to prove a \loj inequality for $F$.

Keeping in mind that $\grad F(\bar u,0)=0$, we consider the  linearisation 
 $$L \colon H^{s}(M,\R^n)\times \R^{6(\gamma -1)} \to H^{s-2}(M,\R^n)\times \R^{6(\gamma-1)}$$
 of $\na F$ at $(\ubar,0)$.
Note that $L$ is the Hessian of $F$ and so, as observed in \cite{Simon1983}, is formally self-adjoint in the sense that $\lan L (v,\mu), (\tilde v,\tilde \mu)\ran=\lan (v,\mu), L (\tilde v,\tilde \mu)\ran$ for the inner product defined in \eqref{def:inner-product}. 
In practice, this will follow from a more abstract machinery that will be invoked later.

We recall that for $p \in N$ we have $h(p) = g_{\R^n}$, in particular $h \circ \bar u = g_{\R^n}$, that $\psi=1$ at $\mu =0$ and that $\tau_{\bar g}(\bar u)=0$. 
Thus the linearisation of 
$v\mapsto \graduF(\bar u+v,0)$
at $v=0$ 
agrees with the linearisation of $v\mapsto -\tau_{\bar g}(\bar u+v)$
at $v=0$, and is thus Fredholm of index 0 from $H^s(M,\R^n)$ to $H^{s-2}(M,\R^n)$
since it is a linear second-order elliptic differential operator with smooth coefficients that can be written as $-\Delta_{\bar g}$ plus 
lower order terms that constitute a compact perturbation.
At the same time, the linearisations of $\mu \mapsto \na F(\ubar,\mu)$ at 
$\mu=0$ and of $v\mapsto \gradmuF(\bar u+v,0)$ at $v=0$
are bounded linear operators with finite dimensional domain respectively range and are thus compact. Combined we hence obtain that $L$ itself is Fredholm of index $0$. 

Similarly, we consider the linearisation $L_{(u,\mu)}$ of $\nabla F$ around different points $(u,\mu) \in H^s(M,\R^n) \times V$. For $k \in \N$, $k\geq 2$, we denote by $\mathcal B_k$ the space of bounded linear operators $H^k(M,\R^n) \times \R^{6(\gamma-1)} \to H^{k-2}(M,\R^n) \times \R^{6(\gamma-1)}$ and by $\norm{\cdot}_{\mathcal B_k}$ the operator norm on this space. We observe from the formulae \eqref{eq:grad-F-u}, \eqref{eq:grad-F-mu}
and the density of $H^s$ in $H^2$ that
the linearisation $L_{(u,\mu)}\in {\mathcal B_s}$ has a (unique) 
extension to an element of $\mathcal B_2$ and the map $(H^s(M,\R^n) \times V, \norm{\cdot}_{H^s\times \R^{6(\gamma-1)}}) \to (\mathcal B_2, \norm{\cdot}_{\mathcal B_2})$ taking $(u,\mu) \mapsto L_{(u,\mu)}$ is continuous. Furthermore, 
the extension of $L$ to $\mathcal B_2$ is Fredholm with index 0.

We choose an $H^s(M,\R^n)$ neighbourhood $U$ of $\bar u$ so that the image of each map $u\in U$ is contained in the neighbourhood of $N\subset \R^n$ where the metric $h$ is analytic and hence so that the functional $F$ and its gradient $\nabla F$ are analytic on $U\times V$.

We can now apply the classical argument of Simon \cite{Simon1983}  to obtain a \loj inequality for $F$, as $F$ and its gradient $\nabla F$ have the required properties 
 that allow Simon's argument to go through, see \cite{LojNotes} and
 \cite[\S 2.4]{Huang2006}. 
Namely we have found
 an $H^s(M,\R^n)$ neighbourhood $U$ of $\bar u$ and a neighbourhood $V$ of $0 \in \R^{6(\gamma -1)}$ such that
 \begin{enumerate}
	\item The functional $F \colon U \times V \to \R$ and its gradient $\nabla F \colon U \times V \to H^{s-2}(M,\R^n)\times \R^{6(\gamma-1)}$ are both analytic.
	\item  For each $(u,\mu) \in U \times V$ the linearisation $L_{(u,\mu)} $ of $\nabla F$ around $(u,\mu)$ is an element of $\mathcal{B}_s$ and has an extension to an element of $\mathcal B_2$. Furthermore this map $(U \times V, \norm{\cdot}_{H^s\times \R^{6(\gamma-1)}}) \to (\mathcal B_2, \norm{\cdot}_{\mathcal B_2})$ is continuous. 
		\item The linearisation $L=L_{(\bar u,0)}\in \mathcal{B}_s$ of $\na F$ around the critical point $(\bar u, 0)$ is Fredholm with index $0$ and its extension to $\mathcal B_2$ is also Fredholm with index 0. 
\end{enumerate} 
We consequently obtain the following \loj inequality for $F$. 
There exist $\alpha \in (0,\frac{1}{2})$, $\si>0$ and $C < \infty$
such that we have the inclusions
$\hat U:=\{u\in H^s(M,\R^n): \norm{u-\ubar}_{H^s}< \sigma\}\subset U$ and
$\{\mu \in \R^{6(\gamma-1)}: \abs{\mu}< \sigma\}\subset V$, 
and we have the estimate
	\begin{equation} \label{eqn:est-F}
	|F(u,\mu) - F(\ubar,0)|^{1-\alpha} \leq C \norm{\nabla F(u,\mu)}
	\end{equation}
for all 
$u \in \hat U$ and $\mu \in \R^{6(\gamma -1)}$ with 
$|\mu| < \sigma$, where $\norm{\cdot}$ is the norm induced by the inner product \eqref{def:inner-product}.

We finally set 
$\hat V=\{h\in \Rea(\Hol(\gbar)): \norm{h}_{L^2(M,\gbar)}<\si\}$. To explain how \eqref{eqn:est-F} implies \eqref{eqn:loj-ineq-affine} we now need to relate $\na F$ to the quantities appearing on the right-hand side of \eqref{eqn:loj-ineq-affine}. 

The formula \eqref{eq:grad-F-u} for $\na_uF$, combined with the equivalence of the metrics $\gbar$ and $\gmu$ (and boundedness of $h$) immediately implies that
\beqa 
\norm{\graduF (u,\mu)}_{L^2(M,\gbar)}
\leq C\norm{\tau_{\gmu}(u)}_{L^2(M,\gmu)}. 
\eeqa
Similarly, 
we can use \eqref{eq:grad-F-mu} to bound 
	\begin{align*}
		\babs{\partial_{\mu_j} F(u, \mu) }&=\tfrac14\babs{ \langle \Rea(\Phi(u,\gmu)), k_j\rangle_{L^2(M,\gmu)}}  \leq \tfrac14 \norm{k_j}_{L^\infty(M,\gmu)}\norm{ \Rea(\Phi(u,\gmu)}_{L^1(M,\gmu)}\\
	&\leq C \norm{k_j}_{L^\infty(M,\gbar)}\norm{\Phi(u,\gmu)}_{L^1(M,\gmu)}\leq C \norm{\Phi(u,\gmu)}_{L^1(M,\gmu)}
	\end{align*}
where we use the equivalence of the metrics $\gbar$ and $\gmu$ in the penultimate step, while the last step follows as the $L^2(M,\gbar)$-norm is equivalent to the $L^\infty(M,\gbar)$-norm
on the finite-dimensional space $\Rea(\Hol(\gbar))$. 
Hence $\norm{\na F(u,\mu)} 
\leq C ( \norm{\tau_\gmu(u)}_{L^2(M,\gmu)}^2 + \norm{ \Phi(u,\gmu)}_{L^1(M,\gmu)}^2 )^\half$ and the claim of the proposition follows from \eqref{eqn:est-F}.
\end{proof}

We will now expand on the remarks given before Proposition \ref{prop:loj-affine} to explain how, for surfaces of genus $\gamma\geq 2$, Proposition \ref{prop:loj-affine} 
implies that the \loj inequality \eqref{eqn:loj-ineq} holds on a $H^{s}$ neighbourhood of $(\ubar, \gbar)$ as claimed in Theorem \ref{thm:loj-ineq}.

\begin{proof}[Proof of Theorem \ref{thm:loj-ineq}]
	Following \cite{Tromba1992} we consider the slice of hyperbolic metrics 
	\begin{equation}
		\mathcal{S} = \left\{ \lambda(k)(\gbar + k) \colon k \in \hat V  \right\}
	\end{equation}
	associated with the neighbourhood $\hat V$ obtained in Proposition \ref{prop:loj-affine} above, which we can assume is chosen small enough so that all metrics $g$ in $\bar g+\hat V$ satisfy $\half \bar g\leq g\leq 2\bar g$. 
 Here  $\lambda \colon \hat V \to C^\infty(M,(0,\infty))$ 
is uniquely defined by the condition that $\lambda(k)(\gbar + k)$ has constant curvature $-1$. We note that the functions $\la(k)$ are uniformly bounded 
for $k\in \hat V$ and thus that $\norm{\tau_{\bar g+k}(u)}_{L^2(M,\bar g+k)}\leq C\norm{\tau_{\la(k)(\bar g+k)}(u)}_{L^2(M,\la(k)(\bar g+k))}$, while  $\norm{\Phi(u,g)}_{L^1(M,g)}$ is invariant under conformal changes.
As the energy is conformally invariant we immediately deduce from Proposition \ref{prop:loj-affine} that there exists $C>0$ so that 	
\beq \label{est:chocolate}
\abs{E(\hat u,\hat g) - E(\ubar,\gbar)}^{1-\alpha} \leq C \left( \norm{\tau_{\hat g}(\hat u)}_{L^2(M,\hat g)}^2 + \norm{ \Phi(\hat u,\hat g)}_{L^1(M,\hat g)}^2 \right)^\half\text{ for } (\hat u,\hat g)\in \hat U\times \mathcal{S}
\eeq
where $\hat U$ is the $H^s$-neighbourhood of $\ubar$ obtained in Proposition \ref{prop:loj-affine}. 

The Poincar\'e inequality for quadratic differentials on hyperbolic surfaces, see \cite[Lemma 2.1]{RT2016} or \cite[Theorem 1.1]{RT2015}, implies that
	\begin{align*}
		\norm{\Phi(\hat u,\hat g) - P_{\hat g}(\Phi(\hat u,\hat g))}_{L^1(M,\hat g)} &\leq C\norm{\bar\partial \Phi(\hat u,\hat g)}_{L^1(M,\hat g)} \leq C\norm{\tau_{\hat g}(\hat u)}_{L^2(M,\hat g)}  .
	\end{align*}
As the area of $(M,\hat g)$ is determined by the genus of $M$ we can thus bound 
$$\norm{\Phi(\hat u,\hat g)}_{L^1(M,\hat g)}\leq C \norm{P_{\hat g}(\Phi(\hat u,\hat g))}_{L^2(M,\hat g)}+C\norm{\tau_{\hat g}(\hat u)}_{L^2(M,\hat g)}.$$
Inserting this into \eqref{est:chocolate} yields the claimed inequality \eqref{eqn:loj-ineq} in the special case of maps $\hat u\in \hat U$ and metrics $\hat g\in \mathcal{S}$.

As \eqref{eqn:loj-ineq} is invariant under pullback by diffeomorphisms we thus
know that the inequality \eqref{eqn:loj-ineq} holds for any pair $(u,g) = f^*(\hat u,\hat g)$ that is obtained by pulling back an element $(\hat u, \hat g) \in \hat U\times \mathcal{S}$ by an arbitrary $H^{s+1}$ diffeomorphism. 
Hence the theorem follows once we prove that there is a neighbourhood $\mathcal{O}$ of $(\ubar,\gbar)$ in $H^{s}\times \M_{-1}^s$ so that every pair in $\mathcal O$ can be written in this form.

Let $\mathcal{D}_0^{s+1}$ be the set of $H^{s+1}$ diffeomorphisms of $M$ that are homotopic to the identity. 
We recall that  the map $H^{s}\times \mathcal{D}_0^{s+1}\to H^s$ given by $(u,f) \mapsto u\circ f^{-1}$ is continuous, see for example Theorem 1.2 of \cite{Inci2013}. We can hence 
choose a neighbourhood $Q$ of the identity in $\mathcal{D}_0^{s+1}$ and a neighbourhood $U$ of $\ubar$ in $H^{s}$ so that $u\circ f^{-1}$ lies in the $H^s$ neighbourhood $\hat U$ for every $f\in Q$ and every $u\in U$. 

The slice theorem, see for example \cite{Tromba1992}, ensures that, after possibly reducing $Q$ and $\mathcal{S}$,
\begin{equation} \label{eqn:slice-map}
		\Xi : \mathcal{S}\times Q \to W, \quad (\hat g, f) \mapsto f^*\hat g
	\end{equation}
is a diffeomorphism onto a neighbourhood $W$ of $\gbar$ in $\M_{-1}^s$. We hence conclude that \eqref{eqn:loj-ineq} holds on the $H^{s}\times \M_{-1}^s$ neighbourhood $\mathcal{O} := U\times W$ of $(\ubar,\gbar)$ as claimed. 
\end{proof}

\begin{rem} \label{rem:Loj-sphere-torus}
For genus one surfaces we can modify the above proof of Theorem \ref{thm:loj-ineq} as follows. 
As we can pull back any given critical point $(\bar u,\bar g)$, $\bar g\in \M_0$, of $E$ by a smooth diffeomorphism to obtain a critical point of $E$ for which the metric component is in $\M^*$, it suffices to consider $(\bar u, \bar g)$ with $\bar g\in \M^*$. 
By considering the function $F: H^{s}(M,\R^n)\times \Hyp  \to \R$ given by $F(u,(a,b))= E(u:(T^2,g_{a,b})\to (\R^n,h))$ 
and arguing as in the proof of Proposition \ref{prop:loj-affine} for higher genus surfaces
we obtain the analogue to Proposition \ref{prop:loj-affine}, now for metrics in a $\Mab$ neighbourhood of $\bar g\in \Mab$, which we may assume to be contained in a compact set $K$.

To obtain Theorem \ref{thm:loj-ineq}, we note that even though the Poincar\'e estimate for quadratic differentials is not uniform for flat tori, it is still valid for metrics in the compact set $K$, now with a constant that also depends on $K$. 
Hence we can argue as above,
replacing the slice theorem of \cite{Tromba1992} 
by the fact that
the map $(g,f)\mapsto f^*g$
is a diffeomorphism onto a neighbourhood 
of $\bar g$ in $\M_0^s$, when considered for $g$ ranging over a small neighbourhood of $\bar g$ in $\Mab$,
and $f$ ranging over a small neighbourhood of the identity in the set of diffeomorphisms 
$\{f \in \mathcal{D}_0^{s+1}  :\ f(p^*)=p^*\}$ that keep some given point $p^*\in T^2$ fixed.

\end{rem}

\emph{Acknowledgements:} 
The first author was supported by EPSRC grant number EP/L015811/1.
The third author was supported by 
EPSRC grant number EP/K00865X/1.

{\sc JK \& MR: 
Mathematical Institute, University of Oxford, Oxford, OX2 6GG, UK}

{\sc PT: Mathematics Institute, University of Warwick, Coventry,
CV4 7AL, UK}

\end{document}